\theoremstyle{plain}
\newtheorem{thm}{Theorem}[section]
\newtheorem{lem}[thm]{Lemma}
\newtheorem{prop}[thm]{Proposition}
\newtheorem*{proper*}{Property}
\newtheorem{cor}[thm]{Corollary}
\newtheorem{conj}[thm]{Conjecture}
\newtheorem{ques}[thm]{Question}
\newtheorem*{lm*}{Lemma}
\newtheorem*{thm*}{Theorem}
\theoremstyle{definition}
\newtheorem{df}[thm]{Definition}
\newtheorem*{df*}{Definition}
\newtheorem{exam}[thm]{Example}
\newtheorem{ex-notn}[thm]{Example/Notation}
\newtheorem{con}[thm]{Construction}
\theoremstyle{remark}
\newtheorem{rem}[thm]{Remark}
\newtheorem*{acknowledgement*}{Acknowledgement}
\newtheorem*{ex*}{Example}
\newtheorem*{exer*}{Exercise}
\newtheorem*{rem*}{Remark}
\newtheorem*{prob*}{Problem}
\newtheorem*{prop*}{Proposition}
\def\cone{\operatorname{cone}}
\def\gr{\operatorname{gr}}
\def\initial{\operatorname{in}}
\def\lcm{\operatorname{lcm}}
\def\KK{\mathbb{K}}
\def\calC{\mathcal{C}}
\def\calF{\mathcal{F}}
\def\calG{\mathcal{G}}
\def\calI{\mathcal{I}}
\def\calL{\mathcal{L}}
\def\calP{\mathcal{P}}
\def\calR{\mathcal{R}}
\def\calV{\mathcal{V}}
\def\bdf{{\bm f}}
\def\bdT{{\bm T}}
\def\bdU{{\bm U}}
\def\bdw{{\bm w}}
\def\bdx{{\bm x}}
\def\bdalpha{ {\bm \alpha}}
\def\bdbeta{ {\bm \beta}}
\def\alert#1{\textcolor{red}{#1}}
\def\deg{\operatorname{deg}}
\def\divides{\operatorname{|}}
\def\Index#1{\emph{#1}}
\def\notdivides{{\not |\,\,}}
\def\onto{\twoheadrightarrow}
\def\sqr#1#2{{\vcenter{\hrule height.#2pt
\hbox{\vrule width.#2pt height#1pt \kern#1pt
\vrule width.#2pt}
\hrule height.#2pt}}}
\DeclareMathOperator\Sym{ Sym}
\def\opn#1#2{\def#1{\operatorname{#2}}} 
\opn\lex{lex}
\opn\rev{rev}
\opn\Lex{Lex}
\opn\GL{GL}
\def\Grobner{Gr\"{o}bner}
\begin{document}
\title{On a class of squarefree monomial ideals of linear type}
\author{Yi-Huang Shen}
\address{Wu Wen-Tsun Key Laboratory of Mathematics of CAS and School of Mathematical Sciences, University of Science and Technology of China, Hefei, Anhui, 230026, People's Republic of China}
\thanks{This work is supported by the National Natural Science Foundation of China (11201445).  Helpful comments from Louiza Fouli and Kuei-Nuan Lin during the preparation of this work  are gratefully appreciated.}
\email{yhshen@ustc.edu.cn}
\subjclass[2010]{ 
05C38, 
05C65, 
13A02, 
13A30. 
}
\keywords{Squarefree monomial ideals; Ideals of linear type; Rees algebras; Hypergraph}
\begin{abstract}
  In a recent work, Fouli and Lin generalized a Villarreal's result and showed that
  if each connected components of the line graph of a squarefree monomial ideal contains at most a unique odd cycle, then this ideal is of linear type.
  In this short note, we reprove this result with Villarreal's original ideas together with a method of Conca and De Negri. We also propose a class of squarefree monomial ideals of linear type.
\end{abstract}
\maketitle

\section{Introduction}
Let $S$ be a Noetherian ring and $I$ an $S$-ideal. The \Index{Rees algebra of $I$} is the subring of the ring of polynomials $S[t]$
\[
\calR(I):=S[It]=\oplus_{i\ge 0}I^i t^i. 
\]

Analogously, one has $\Sym(I)$, the \Index{symmetric algebra of $I$} which is obtained from the tensor algebra of $I$ by imposing the commutative law. 
The symmetric algebra $\Sym(I)$ is equipped with an $S$-Module homomorphism $\pi: I\to \Sym(I)$ which solves the following universal problem. For a commutative $S$-algebra $B$ and any $S$-module homomorphism $\varphi: I\to B$, there exists a unique $S$-algebra homomorphism $\Phi:\Sym(I)\to B$ such that the diagram
\[
\xymatrix{
I \ar[r]^\varphi \ar[d]_\pi & B \\
\Sym(I) \ar[ur]_\Phi
}
\]
is commutative. 

Both Rees algebras and symmetric algebras have been studied by many authors from different point of views. For instance, it is known that there is a canonical surjection $\alpha:\Sym(I)\onto \calR(I)$. When $S$ is an integral domain, the kernel of $\alpha$ is just the $S$-torsion submodule of $\Sym(I)$ (cf.~\cite[page 3]{MR1275840}).
The main purpose of this article is to investigate when the canonical map $\alpha$ is an isomorphism; whence, $I$ is called an ideal of \Index{linear type}. 

Suppose $I=\braket{f_1,\dots,f_s}$ and consider the presentation $\psi: S[\bdT]:=S[T_1,\dots,T_s] \to S[It]$ defined by setting $\psi(T_i)=f_it$. Since this map is homogeneous, the kernel $J=\bigoplus_{i\ge 1}J_i$ is a graded ideal; it will be called the defining ideal of $\calR(I)$ (with respect to this presentation).  Since the linear part $J_1$ generates the defining ideal of $\Sym(R)$ (cf.~\cite[page 2]{MR1275840}), $I$ is of linear type if and only if $J=\braket{J_1}$.

From now on, we assume that $S=\KK[x_1,\dots,x_n]$ is a polynomial ring over the field $\KK$ and $I$ is a monomial ideal. In this case, Conca and De Negri introduced the notion of $M$-sequence (which we will investigate in section 2) and showed in \cite[Theorem 2.4.i]{MR1666661} that if $I$ is generated by an $M$-sequence of monomials, then $I$ is of linear type,

If in addition $I$ is squarefree, it can be realized as the facet ideal of some simplicial complex $\Delta$ with vertex set $[n]:=\Set{1,2,\dots,n}$. Let $\calF(\Delta)$ be the set of facets of $\Delta$.  Recall that a facet $F\in \calF(\Delta)$ is called a \Index{(simplicial) leaf} if either $F$ is the only facet of $\Delta$ or there exists a distinct $G\in \calF(\Delta)$ such that $H\cap F \subset G\cap F$ for each $H\in \calF(\Delta)$ with $H\ne F$. The simplicial complex $\Delta$ is called a \Index{(simplicial) forest} if each subcomplex of $\Delta$ still has a leaf. And a connected forest is called a \Index{(simplicial) tree}. In this situation, Soleyman Jahan and Zheng \cite[Theorem 1.14]{MR2968912} showed that $I$ is generated by an $M$-sequence if and only if $\Delta$ is a forest. Thus, the facet ideal of forests are of linear type.

Squarefree monomial ideals of degree $2$ can also be realized as the edge ideal of some finite simple graph $G$. When this $G$ is connected, Villarreal \cite[Corollary 3.2]{MR1335312} showed that $I$ is of linear type if and only if $G$ is a tree or contains a unique odd cycle. Fouli and Lin generalized this pattern to higher degrees. Let $G=L(I)$ be the \Index{line graph} of $I$. Then the vertices $v_i$ of $G$ correspond to the minimal monomial generators $f_i$ of $I$ respectively, and $\Set{v_i,v_j}$ is an edge of $G$ if and only if $\gcd(f_i,f_j)\ne 1$. This $G$ is also known as the \Index{generator graph} of $I$. If $G$ is a forest or each connected component of $G$ contains at most a unique odd cycle, Fouli and Lin \cite[Theorem 3.4]{arXiv:1205.3127} showed that $I$ is of linear type. We will generalize this result and propose a new class of monomial ideals of linear type.

This paper is organized as follows. In section 2, we introduce the notion of $M$-element and show that the class of squarefree monomial ideals of linear type is closed under the operation of adding $M$-elements. In section 3, we consider the simplicial cycles, which are introduced by Caboara, Faridi and Selinger \cite{MR2284286}, and study the linear type property. In section 4,
we generalize some of Villarreal's results and ideas. In particular, we will reprove Fouli and Lin's results.
In Section 5, We introduce the Villarreal class of simplicial complexes and show that corresponding squarefree ideals are of linear type.  In the final section, we consider the notion of cycles introduced by other authors. 

\section{$M$-elements}
Throughout this section, let $I$ be a squarefree monomial ideal in $S=\KK[x_1,\dots,x_n]$ with the minimal monomial generating set $G(I)=\Set{f_1,\dots,f_s}$. We also write $I'=\braket{f_2,\dots,f_s}$.

\begin{df}
  \label{M-element}
  Following the spirit of Conca and De Negri \cite{MR1666661}, we say $f_1$ is an \Index{$M$-element} of $I$, if there exists a total order on the set of indeterminates that appear in $f_1$, say $x_1<\cdots <x_r$ with $f_1=x_1\cdots x_r$, such that whenever $x_k\mid f_j$ with $1\le k \le r$ and $1<j$, then $x_k\cdots x_r \mid f_j$. If $f_1$ is an $M$-element of $I$, we say $I$ is obtained from $I'$ by \Index{adding an $M$-element}. 
\end{df}

Thus, the sequence of squarefree monomials $f_1,\dots,f_s$ is an $M$-sequence in the sense of \cite{MR1666661} if and only if $f_i$ is an $M$-element of the ideal $\braket{f_i,f_{i+1},\dots,f_s}$ for each $i$ with $1\le i \le s$.

Recall that a variable $x_i$ is called a \Index{free variable} of $I$ if there exists an index $u\in [s]$ such that $x_i\divides f_u$ and $x_i\notdivides f_j$ for any $j\ne u$.  If $k=1$ in the Definition \ref{M-element}, then $f_1=x_1\cdots x_r\mid f_j$, which contradicts the assumption that $G(I)=\Set{f_1,\dots,f_r}$ is the minimal monomial generating set of $I$. Thus, a necessary condition for $f_1$ to be an $M$-element is that $f_1$ contains a free variable.

Now, consider the presentation 
\[
\psi: S[\bdT] \to \calR(I)
\]
defined by setting $\psi(T_i)=f_i t$ and denote by $J$ the kernel of $\psi$.
Similarly, we consider the presentation
\[
\psi':S[\bdT']:=S[T_2,\dots,T_s] \to \calR(I')
\]
defined by setting $\psi'(T_i)=f_it$ and denote by $J'$ the kernel of $\psi'$.
Obviously $J'S[\bdT]\subseteq J$. Following \cite{MR1666661}, we set
\[
m_{ij}=f_i/\gcd(f_i,f_j) \quad \text{ and } \quad l_{ij}=m_{ij}T_j - m_{ji}T_i
\]
for all $1\le i<j \le s$.
Let $\tau$ be a monomial order on $S[\bdT]$ such that the initial term $\initial_\tau(l_{ij})=m_{ij}T_j$; for instance, one can take the lexicographic order induced by the total order $T_s>\cdots > T_1$. This $\tau$ induces a monomial order on $S[\bdT']$, which we shall denote by $\tau'$.

\begin{prop}
  [{Essentially \cite[Theorem 2.4.i]{MR1666661}}]
  \label{CDN}
 Suppose $f_1$ is an $M$-element and $\Set{v_1,\dots,v_r}$ form a {\Grobner} basis of $J'$ with respect to $\tau'$. Then
 \[
 \Set{l_{1j}\mid 2\le j \le s} \cup \Set{v_1,\dots,v_r}
 \]
 form a {\Grobner} basis of $J$ with respect to $\tau$.
\end{prop}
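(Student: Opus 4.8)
The plan is to prove the statement by verifying that
\[
H:=\Set{l_{1j}\mid 2\le j\le s}\cup\Set{v_1,\dots,v_r}
\]
is a {\Grobner} basis of $J$ for $\tau$. Since each $l_{1j}$ is a syzygy of $f_1,f_j$ (so $\psi(l_{1j})=0$) and each $v_i\in J'\subseteq J$, we have $H\subseteq J$; thus it suffices to show two things: (I) $\langle H\rangle=J$, and (II) every S-polynomial of a pair of elements of $H$ reduces to $0$ modulo $H$, which by Buchberger's criterion makes $H$ a {\Grobner} basis of $\langle H\rangle$. Throughout I would record that $\initial_\tau(l_{1j})=m_{1j}T_j$, that $\tau$ restricts to $\tau'$ on $S[\bdT']$ so that the $\initial_{\tau'}(v_i)$ generate $\initial_{\tau'}(J')$, and that no leading term occurring in $H$ involves $T_1$.

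First I would extract the combinatorial content of the $M$-element hypothesis. For each $j\ge 2$, Definition \ref{M-element} forces the set of variables of $f_1$ dividing $f_j$ to be a suffix $x_{k_j}\cdots x_r$ of $f_1$; hence $\gcd(f_1,f_j)=x_{k_j}\cdots x_r$ and $m_{1j}=x_1\cdots x_{k_j-1}$ is a \emph{prefix} of $f_1$ (with $k_j\ge 2$, since $k_j=1$ would mean $f_1\mid f_j$, contradicting minimality). Two consequences are crucial: the monomials $\Set{m_{1j}\mid j\ge 2}$ are totally ordered by divisibility, being prefixes of the chain $x_1,\dots,x_r$; and $x_1$ is a free variable of $I$, so $x_1$ divides every $m_{1j}$ but occurs in no $f_j$ with $j\ge 2$, hence in none of the $v_i$.

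Next I would split the S-pairs into three families. For a pair $v_i,v_k$, the S-polynomial lies in $S[\bdT']$, where $\Set{v_1,\dots,v_r}$ is already a {\Grobner} basis for $\tau'$; the resulting standard representation uses only the $v$'s and therefore witnesses reduction to $0$ in $S[\bdT]$ as well. For a pair $l_{1j},l_{1k}$ I would use the chain property: assuming $m_{1j}\mid m_{1k}$, a direct computation with $\operatorname{lcm}=m_{1k}T_jT_k$ gives
\[
S(l_{1j},l_{1k})=T_1\Bigl(m_{k1}T_j-\tfrac{m_{1k}}{m_{1j}}\,m_{j1}T_k\Bigr)=-e\,T_1\,l_{jk},
\qquad e=\gcd(f_j,f_k)/\gcd(f_1,f_k),
\]
using the identities $m_{1k}/m_{1j}=\gcd(f_1,f_j)/\gcd(f_1,f_k)$ and $\gcd(f_1,f_k)\mid\gcd(f_j,f_k)$, both immediate from the suffix description above. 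As $l_{jk}\in J'$ and $\Set{v_i}$ is a {\Grobner} basis of $J'$, the monomial multiple $-e\,T_1\,l_{jk}$ reduces to $0$ modulo the $v$'s.

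The mixed pairs $l_{1j},v_i$ are the main obstacle, since their leading terms $m_{1j}T_j$ and $\initial_{\tau'}(v_i)$ need not be coprime, so neither the product criterion nor a one-step reduction applies directly. Writing $v_i=u_i-w_i$ with $u_i=\initial_{\tau'}(v_i)$, the S-polynomial is again a binomial, one of whose monomials carries the free variable $x_1$ inherited from $m_{1j}$ while the other carries $T_1$. My plan is to reduce the $x_1$-free, $T_1$-bearing monomial by the $l_{1k}$ selected through the divisibility chain of the $m_{1\bullet}$, and to absorb the remaining $x_1$-bearing monomial into $J'$ via $\Set{v_i}$; the freeness of $x_1$ is exactly what keeps these two reductions from interfering. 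Checking that all intermediate leading terms stay below the relevant least common multiple, so that Buchberger's criterion is genuinely met, is the delicate point I would spend the most care on. Finally, to settle generation (I), I would use the relations $l_{1j}$ to eliminate $T_1$ from an arbitrary $F\in J$, landing in $J\cap S[\bdT']=\ker(\psi|_{S[\bdT']})=J'=\langle v_1,\dots,v_r\rangle$; the $M$-element condition is what makes this elimination terminate and remain inside $\langle H\rangle$. Together (I) and (II) yield that $H$ is a {\Grobner} basis of $J$.
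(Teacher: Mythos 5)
Your preparatory observations are correct and are nicely done: $\gcd(f_1,f_j)$ is a suffix of $f_1$ and $m_{1j}=x_1\cdots x_{k_j-1}$ a \emph{nonempty} prefix, the $m_{1j}$ form a divisibility chain, $x_1$ is a free variable, and your computation $S(l_{1j},l_{1k})=-e\,T_1\,l_{jk}$ with $e=\gcd(f_j,f_k)/\gcd(f_1,f_k)\in S$ is valid. But the two steps that carry the entire content of the proposition are left as plans, and the plan for the first one cannot work as stated. For a mixed pair, the $T_1$-bearing monomial of $S(l_{1j},v_i)$, namely $\bigl(\lcm(m_{1j}T_j,u_i)/(m_{1j}T_j)\bigr)\,m_{j1}T_1$, contains no $x_1$ (as you yourself note), while \emph{every} $\initial_\tau(l_{1k})=m_{1k}T_k$ is divisible by $x_1$, precisely because each $m_{1k}$ is a nonempty prefix $x_1\cdots x_{k_k-1}$. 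Hence no $l_{1k}$ can ever reduce that monomial, contrary to your stated strategy of reducing the $T_1$-bearing term ``by the $l_{1k}$ selected through the divisibility chain''; whatever reduction to zero exists must route through the $v_i$'s, and you never exhibit it. (A smaller issue: you treat each $v_i$ as a binomial avoiding $x_1$, but the hypothesis is an arbitrary {\Grobner} basis of $J'$; passing to a binomial basis is harmless but must be said.)

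The second gap is step (I), $\langle H\rangle=J$, which you dispatch with ``use the $l_{1j}$ to eliminate $T_1$ \dots\ the $M$-element condition is what makes this elimination terminate.'' That is an assertion, not an argument, and it is exactly where the hypothesis must do its work: for the $4$-cycle $f_1=x_1x_2$, $f_2=x_2x_3$, $f_3=x_3x_4$, $f_4=x_4x_1$ (where $f_1$ is \emph{not} an $M$-element) one has $T_1T_3-T_2T_4\in J$, yet $\langle\{l_{1j}\}\rangle+J'S[\bdT]\subseteq\langle J_1\rangle\subsetneq J$, so no soft elimination of $T_1$ can succeed in general, and your sketch never shows how the $M$-element property blocks this failure. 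The paper's proof avoids both difficulties at once: by Sturmfels' theorem the toric ideal $J$ has a universal {\Grobner} basis of binomials, so it suffices to take a binomial $a\bdT^\bdalpha-b\bdT^\bdbeta\in J$ neither of whose terms is divisible by a monomial of $Q=\{m_{1j}T_j\}\cup\{\initial_{\tau'}(v_i)\}$ and derive a contradiction from the $M$-element property (if $T_1\mid\bdT^\bdalpha$, then either $f_1\mid b$, forcing $m_{1j}T_j\mid b\bdT^\bdbeta$ for any $T_j$ in $\bdT^\bdbeta$; or the minimal $k$ with $x_k\nmid b$ yields $j$ with $x_k\cdots x_r\mid f_j$, whence $m_{1j}\mid x_1\cdots x_{k-1}\mid b$ and again $m_{1j}T_j\mid b\bdT^\bdbeta$; or $k=1$ contradicts minimality of $G(I)$). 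That single contradiction shows $Q$ generates $\initial_\tau(J)$, delivering the {\Grobner} basis property and $\langle H\rangle=J$ simultaneously. To complete your approach you would have to supply genuine proofs of (I) and of the mixed-pair reductions; adopting the binomial universal basis argument is the efficient repair.
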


\begin{proof}
  We argue by contradiction. Suppose the claim is false. Write 
  \[
  Q=\Set{m_{1j}T_j \mid 2\le j \le s} \cup \Set{\initial_{\tau'}(v_j)\mid 1\le j \le r}.
  \]
  Notice that $J$ has a universal {\Grobner} basis consisting of binomial relations by \cite[Lemma 2.2]{MR1104431}. Thus, it suffices to consider a binomial relation $f:=a\bdT^\bdalpha -b\bdT^\bdbeta\in J$ with $a,b\in S$ being monomials and the initial monomials of $f$ being not in $Q$. As a matter of fact, we may further assume that 
  \begin{enumerate}[1]
    \item $\gcd(a\bdT^\alpha,b\bdT^\beta)=1$;
    \item \label{assum2}
      neither $a\bdT^\alpha$ nor $b\bdT^\beta$ is divisible by any monomial in $Q$.
  \end{enumerate}

  Let $i$ be the smallest index such that $T_i$ appears in $\bdT^\alpha$ or in $\bdT^\beta$. If $i\ne 1$, then $f\in J'$. This contradicts to the choice of $\Set{v_1,\dots,v_t}$. If $i=1$, by symmetry, we may assume that $T_1\divides \bdT^\bdalpha$. Consequently, $f_1\divides a\psi(\bdT^\bdalpha)=b\psi(\bdT^\bdbeta)$. We have two cases.
  \begin{enumerate}[i]
    \item If $f_1\divides b$, we can let $T_j$ be any of the indeterminates in $\bdT^\bdbeta$. Then $m_{1j}T_j \divides f_1T_j \divides b\bdT^\bdbeta$. Since $1<j$, this contradicts the assumption \ref{assum2} above.
    \item Otherwise, $f_1=x_1\cdots x_r\notdivides b$. Suppose the indeterminates are ordered as in Definition \ref{M-element} and $k\in [r]$ is the minimal index such that $x_k\notdivides b$. If $k\ge 2$, then $x_1\cdots x_{k-1}\divides b$. On the other hand, since $x_k\divides f_1 \divides b\psi(\bdT^\bdbeta)$, there is some $j$ such that $T_j\divides \bdT^{\bdbeta}$ and $x_k\divides f_j$. Since $1<j$ and $f_1$ is an $M$-element, one has $x_k\cdots x_r \divides f_j$. Consequently, $x_k\cdots x_r \divides \gcd(f_1,f_j)$ and $m_{1j}\divides x_1\cdots x_{k-1}\divides b$. It follows that $m_{1j}T_j\divides b\bdT^\bdbeta$. This contradicts the assumption \ref{assum2} again. 
      
      If instead $k=1$, we will find a similar $j>1$ and conclude that $f_1=x_1\cdots x_r \divides f_j$. This contradicts the minimality of $G(I)$.
      \qedhere
  \end{enumerate}
\end{proof}

According to \cite{MR1666661}, the ideal $I$ is of \Index{{\Grobner} linear type} if the linear relations in the defining ideal $J$ form a {\Grobner} basis of $J$ (with respect to some monomial order on $S[\bdT]$).

\begin{cor}
  If $I'$ is of {\Grobner} linear type with respect to $\tau'$, then  $I$ is of {\Grobner} linear type with respect to $\tau$.
\end{cor}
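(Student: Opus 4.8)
The plan is to read the corollary off directly from Proposition~\ref{CDN}, keeping the standing hypothesis that $f_1$ is an $M$-element of $I$. First I would unwind what ``\Grobner{} linear type'' means on each side. The defining ideal $J'$ of $\calR(I')$ has its linear part generated by the binomials $l_{ij}$ with $2\le i<j\le s$, and these are exactly the linear relations of $J'$; likewise the linear relations of $J$ are the $l_{ij}$ with $1\le i<j\le s$. So the hypothesis that $I'$ is of \Grobner{} linear type with respect to $\tau'$ says precisely that $\Set{l_{ij}\mid 2\le i<j\le s}$ is a \Grobner{} basis of $J'$ with respect to $\tau'$.

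Next I would invoke Proposition~\ref{CDN} with this \Grobner{} basis playing the role of $\Set{v_1,\dots,v_r}$. Since $f_1$ is an $M$-element, the proposition yields that
\[
\Set{l_{1j}\mid 2\le j\le s}\cup\Set{l_{ij}\mid 2\le i<j\le s}
\]
is a \Grobner{} basis of $J$ with respect to $\tau$. Now observe that this union is nothing but $\Set{l_{ij}\mid 1\le i<j\le s}$, the full set of linear relations of $J$. Hence the linear relations of $J$ form a \Grobner{} basis of $J$ with respect to $\tau$, which is exactly the assertion that $I$ is of \Grobner{} linear type with respect to $\tau$.

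The only points that need a moment's care --- and they are the closest thing to an obstacle here --- are bookkeeping. One should check that the order $\tau$ restricts to $\tau'$ on $S[\bdT']$ with the same choice of initial terms $\initial(l_{ij})=m_{ij}T_j$, so that a \Grobner{} basis of $J'$ under $\tau'$ is genuinely what Proposition~\ref{CDN} consumes; and one should check that the two index families $\Set{l_{1j}\mid 2\le j\le s}$ and $\Set{l_{ij}\mid 2\le i<j\le s}$ glue to give exactly all $l_{ij}$ with $1\le i<j\le s$, with neither overlap nor omission. Both are immediate from the definitions, so in the end the corollary is really just a reformulation of Proposition~\ref{CDN} once one recognizes that ``linear type for $I'$'' supplies the required \Grobner{} basis $\Set{v_1,\dots,v_r}$.
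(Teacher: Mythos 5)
Your proof is correct and is essentially the paper's own argument: the paper states this corollary without proof precisely because it is the immediate specialization of Proposition~\ref{CDN} to the case where the Gr\"obner basis $\Set{v_1,\dots,v_r}$ of $J'$ is taken to be the linear relations $\Set{l_{ij}\mid 2\le i<j\le s}$, so that the output basis $\Set{l_{1j}\mid 2\le j\le s}\cup\Set{l_{ij}\mid 2\le i<j\le s}$ is the full set of linear relations of $J$. Your bookkeeping remarks (that $\tau$ restricts to $\tau'$ and that the two families glue to all $l_{ij}$ with $1\le i<j\le s$) are exactly the right points to verify, and they hold by the construction of $\tau$ given before Proposition~\ref{CDN}.
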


\begin{cor}
  \label{res0}
  If $I'$ is of linear type, then so is $I$.
\end{cor}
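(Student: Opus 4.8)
The plan is to extract from Proposition~\ref{CDN} not merely the preservation of {\Grobner} linear type recorded in the previous corollary, but the stronger ideal-theoretic decomposition
\[
J = \braket{l_{1j} \mid 2 \le j \le s} + J'S[\bdT].
\]
To obtain this, I would first fix \emph{any} {\Grobner} basis $\Set{v_1,\dots,v_r}$ of $J'$ with respect to $\tau'$; such a basis always exists, and at this stage we need nothing about whether its elements are linear. Since $f_1$ is an $M$-element, Proposition~\ref{CDN} tells us that $\Set{l_{1j}\mid 2\le j\le s}\cup\Set{v_1,\dots,v_r}$ is a {\Grobner} basis of $J$, and in particular this set generates $J$. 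Because the $v_k$ already generate $J'$ as an ideal of $S[\bdT']$, the ideal they generate inside $S[\bdT]$ is exactly $J'S[\bdT]$, which yields the displayed decomposition.

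With this decomposition in hand, the corollary follows by substituting the hypothesis. Recall that $I$ is of linear type precisely when $J$ is generated by its linear part $J_1$, and that for a monomial ideal this linear part is $\braket{l_{ij}\mid 1\le i<j\le s}$; the analogous description holds for $J'$ with indices $2\le i<j\le s$. Now if $I'$ is of linear type, then $J'=\braket{l_{ij}\mid 2\le i<j\le s}$, so that $J'S[\bdT]$ is generated by these linear binomials. Feeding this into the decomposition gives
\[
J = \braket{l_{1j}\mid 2\le j\le s} + \braket{l_{ij}\mid 2\le i<j\le s} = \braket{l_{ij}\mid 1\le i<j\le s} = \braket{J_1}.
\]
Hence $I$ is of linear type.

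The one point that requires care --- and the reason this does not reduce verbatim to the previous corollary --- is the gap between linear type and {\Grobner} linear type. The hypothesis only says that the linear forms $l_{ij}$ \emph{generate} $J'$, not that they form a {\Grobner} basis, so one cannot simply take them as the $v_k$ and invoke Proposition~\ref{CDN} directly. The resolution is to keep the two roles separate: use an arbitrary {\Grobner} basis of $J'$ solely to certify, via the proposition, that $J=\braket{l_{1j}\mid 2\le j\le s}+J'S[\bdT]$ as ideals, and only afterwards use linear type of $I'$ to rewrite $J'$ in terms of its linear generators. Since every statement involved concerns the ideals $J$ and $J'$ themselves, it is independent of the chosen monomial orders, and no compatibility between $\tau$ and the linear-type hypothesis is needed.
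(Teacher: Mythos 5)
Your proof is correct and follows essentially the same route as the paper: invoke Proposition~\ref{CDN} with an arbitrary {\Grobner} basis of $J'$ to conclude that $\Set{l_{1j}\mid 2\le j\le s}\cup\Set{v_1,\dots,v_r}$ generates $J$, then use the linear-type hypothesis on $I'$ (together with the standard fact that the first syzygies of monomials are generated by the $l_{ij}$) to rewrite each $v_k$ in terms of linear forms. Your packaging of this as the ideal identity $J=\braket{l_{1j}\mid 2\le j\le s}+J'S[\bdT]$, and your explicit remark that the {\Grobner} basis and the linear generators play separate roles, are just a cleaner articulation of what the paper's proof does implicitly.
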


\begin{proof}
  Following Proposition \ref{CDN}, each binomial relation $a\bdT^\bdalpha-b\bdT^\bdbeta\in J$ can be written as an $S[\bdT]$-linear combination of $\Set{l_{1j}\mid 2\le j \le s} \cup \Set{v_1,\dots,v_r}$.
  If $I'$ is of linear type, each $v_k$ in $\Set{v_1,\dots,v_r}$ can be written as an $S[\bdT']$-linear combination of $\Set{l_{ij}\in J' | 2\le i<j\le s}$. Thus $J$ is generated by its linear part and $I$ is of linear type.
\end{proof}

\begin{ques}
  If the ideal $I$ is of linear type, is $I'$ also of linear type?
\end{ques}

\begin{ques}
  If $\Gamma$ is a simplicial tree and $K$ is its facet ideal, then
  \begin{enumerate}[1]
    \item $K$ has sliding depth by \cite[Theorem 1]{Faridi2002};
    \item $K$ satisfies $\calF_1$ by \cite[Proposition 4]{Faridi2002};
    \item $K$ is sequentially Cohen-Macaulay by \cite[Corollary 5.6]{MR2043324};
    \item the Rees algebra of $K$ is normal and Cohen-Macaulay by \cite[Corollary 4]{Faridi2002}.
  \end{enumerate}
  Thus, it is a natural question to ask: if $I'$ is a squarefree monomial ideal that satisfies one of the above properties, will $I$ (obtained from $I'$ by adding $M$-elements) satisfy the same property as well?
\end{ques}

Let $\calL$ be the class of squarefree monomial ideals of linear type.

\begin{cor}
  \label{res1}
  The class $\calL$ is closed under the operation of adding $M$-elements.
\end{cor}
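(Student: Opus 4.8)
The plan is to recognize that Corollary~\ref{res1} is merely a restatement of Corollary~\ref{res0} in the language of the class $\calL$, so the essential work has already been carried out. First I would unwind the assertion: to say that $\calL$ is closed under the operation of adding $M$-elements means precisely that whenever $I'\in\calL$ and $I$ is obtained from $I'$ by adjoining an $M$-element $f_1$ --- so that $I=\braket{f_1,f_2,\dots,f_s}$ with $I'=\braket{f_2,\dots,f_s}$ and $f_1$ an $M$-element of $I$ --- one must conclude $I\in\calL$.

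To establish this I would verify the two conditions defining membership in $\calL$. Since $f_1=x_1\cdots x_r$ is a squarefree monomial by Definition~\ref{M-element} and $I'$ is a squarefree monomial ideal by hypothesis, the ideal $I$ is again a squarefree monomial ideal; this first condition holds automatically. The linear-type condition is then supplied directly by Corollary~\ref{res0}: as $I'$ is of linear type, so is $I$. Combining the two yields $I\in\calL$, which is exactly the claimed closure.

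There is essentially no obstacle here, since the real content was already extracted in Corollary~\ref{res0}, which in turn rests on the {\Grobner} basis description of the defining ideal $J$ furnished by Proposition~\ref{CDN}. The only point deserving a moment's care is the preservation of squarefreeness when passing from $I'$ to $I$, and this is immediate from the form of an $M$-element. Should one prefer to read ``adding $M$-elements'' as a finite iteration of single additions, building $I$ from $I'$ through a chain of such steps, then a one-line induction on the number of additions --- each step invoking Corollary~\ref{res0} --- completes the argument.
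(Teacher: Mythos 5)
Your proof is correct and matches the paper exactly: the paper's entire proof of Corollary~\ref{res1} reads ``This is a paraphrase of Corollary~\ref{res0},'' and your unwinding of the class-membership conditions (squarefreeness plus linear type, the latter supplied by Corollary~\ref{res0}) is just a careful spelling-out of that same observation.
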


\begin{proof}
  This is a paraphrase of Corollary \ref{res0}.
\end{proof}

\section{Simplicial cycles}
The following notation and terminology regarding graphs and simplicial complexes will be fixed throughout this work.

Let $\calG$ be a simple graph. Following \cite{MR1335312}, a \Index{walk} of length $k$ in $\calG$ is an alternating sequence of vertices and edges $w=\Set{v_1,z_1,v_2,\dots,v_{k-1},z_k,v_k}$, where $z_i=\Set{v_{i-1},v_i}$ is the edge joining $v_{i-1}$ and $v_i$. The walk $w$ is \Index{closed} if $v_0=v_k$. A \Index{cycle} of length $k$ is a closed walk, in which the vertices are distinct. A closed walk of even length is called a \Index{monomial walk}.

Let $\Delta$ be a simplicial complex with vertex set $[n]$ and the set of facets $\calF(\Delta)=\Set{F_1,\dots,F_s}$.  For each facet $F\in \calF(\Delta)$, let $\bdx_F=\prod_{i\in F}x_i$. The ideal $I(\Delta)=\braket{\bdx_{F_1},\dots,\bdx_{F_s}}$ is the \Index{facet ideal} of $\Delta$. The facet set can also be treated as a \Index{clutter} $\calC$, namely, a hypergraph such that no edge of $\calC$ is properly contained in any other edge of $\calC$. In this situation, the facet ideal is also known as the \Index{edge ideal} or \Index{circuit ideal} of $\calC$. Throughout this paper, when the facet ideal (resp.~edge ideal) is of linear type, we shall say that the original simplicial complex (resp.~clutter) is of \Index{linear type}.

Following \cite{MR2284286}, two facets $F$ and $G$ of $\Delta$ are \Index{strong neighbors}, written $F\sim_\Delta G$, if $F\ne G$ and for all facets $H\in \Delta$, $F\cap G \subset H$ implies that $H=F$ or $H=G$.  The simplicial complex $\Delta$ is called a \Index{simplicial cycle} or simply a \Index{cycle} if $\Delta$ has no leaf but every nonempty proper subcomplex of $\Delta$ has a leaf.  This definition is more restrictive than the classic definition of  (hyper)cycles of hypergraphs due to Berge \cite[page 155]{MR1013569}. Simplicial cycles are minimal hypercycles in the sense that once a facet is removed, what remains is not a cycle anymore, and does note contain one. The following theorem characterizes the structure of simplicial cycles.

\begin{lem}
  [{\cite[Theorem 3.16]{MR2284286}}]
  \label{simpl_cycle}
  Let $\Delta$ be a simplicial complex. Then $\Delta$ is a simplicial cycle if and only if the facets of $\Delta$ can be written as a sequence of strong neighbors $F_1\sim_\Delta F_2 \sim_\Delta \cdots \sim_\Delta F_s \sim_\Delta F_1$ such that $s\ge 3$ and for all $i,j$,
  \[
  F_i\cap F_j =\bigcap_{k=1}^s F_k \qquad \text{if $j\not\equiv i-1,\, i,\, i+1\mod n$}.
  \]
\end{lem}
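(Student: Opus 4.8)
The plan is to prove the two implications separately, treating the ``if'' direction (reconstructing the cycle from the combinatorial data) as the routine part and the ``only if'' direction (extracting the cyclic strong-neighbor order from the leaf-free/locally-leafy hypothesis) as the crux.

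For the backward direction, assume the facets admit the displayed cyclic arrangement. First I would show $\Delta$ has no leaf. Since $s\ge 3$, the ``only facet'' clause is vacuous, so a leaf $F_i$ would come with a joint $F_j$, $j\ne i$, satisfying $F_k\cap F_i\subseteq F_j\cap F_i\subseteq F_j$ for every facet $F_k$ with $k\ne i$. Applying this with $k=i+1$ and invoking $F_i\sim_\Delta F_{i+1}$ (no third facet contains $F_i\cap F_{i+1}$) forces $F_j\in\{F_i,F_{i+1}\}$, hence $j=i+1$; the symmetric argument with $k=i-1$ forces $j=i-1$. As $i-1\not\equiv i+1$ modulo $s$ when $s\ge 3$, this is a contradiction, so no facet is a leaf. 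Next, a nonempty proper subcomplex is generated by a proper subset $T\subsetneq\calF(\Delta)$; deleting the missing facets cuts the cyclic sequence into one or more arcs. I would take an extreme facet $F_i$ of an arc, whose only arc-neighbor is, say, $F_{i+1}$, and check that it is a leaf of the subcomplex with joint $F_{i+1}$: for every other $F_k$ in $T$ we have $k\not\equiv i-1,\,i,\,i+1$, so the intersection formula gives $F_k\cap F_i=\bigcap_l F_l\subseteq F_{i+1}\cap F_i$. Thus every proper subcomplex has a leaf and $\Delta$ is a cycle.

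For the forward direction, assume $\Delta$ is a simplicial cycle. Removing a single facet $F$ yields a proper subcomplex $\Delta\setminus F$ all of whose subcomplexes are again proper subcomplexes of $\Delta$; by hypothesis each has a leaf, so $\Delta\setminus F$ is a forest, and (using connectivity of $\Delta$) in fact a tree with a linear spine. The strategy is to show that the strong-neighbor relation $\sim_\Delta$ makes each facet adjacent to exactly two others, so that the facets organize into a disjoint union of ``strong-neighbor cycles,'' and then to use connectedness together with the minimality built into being a cycle (every facet-deletion produces a forest, so no facet is redundant) to conclude there is a single such cycle exhausting $\calF(\Delta)$. Having fixed the cyclic order $F_1\sim_\Delta\cdots\sim_\Delta F_s\sim_\Delta F_1$, the remaining task is the intersection formula: for non-consecutive $i,j$ one must show that $F_i\cap F_j$ collapses to the global intersection $\bigcap_k F_k$. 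I would argue this from leaf-freeness: if some non-consecutive intersection were strictly larger than the common one, one could locate a facet absorbing all neighboring intersections along a sub-arc and exhibit a leaf of $\Delta$ (or of a proper subcomplex that was supposed to remain a cycle), a contradiction.

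The main obstacle is the forward direction, and within it two intertwined points. The first is the local regularity statement---that in a leaf-free complex whose proper subcomplexes are all forests, every facet has precisely two strong neighbors---since this is what upgrades the loose ``has no leaf'' hypothesis into a genuine cyclic adjacency structure; the delicate part is ruling out a facet with three or more strong neighbors and ensuring the relation does not split into several components. The second, and combinatorially the hardest, is the collapse of all long-range intersections to the single set $\bigcap_k F_k$: here one must convert each failure of the intersection formula back into the existence of a leaf, and getting the subcomplex bookkeeping right---so that the offending configuration really produces a leaf in $\Delta$ or in an honestly proper subcomplex---is where most of the care is needed.
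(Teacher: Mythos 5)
First, a point of comparison: the paper itself contains no proof of this lemma --- it is quoted verbatim as Theorem 3.16 of Caboara--Faridi--Selinger \cite{MR2284286} --- so your proposal can only be judged on its own merits. Judged that way, your backward direction is essentially complete and correct: the strong-neighbor hypothesis applied at $k=i+1$ and $k=i-1$ pins any would-be joint of a leaf $F_i$ to be simultaneously $F_{i+1}$ and $F_{i-1}$, impossible for $s\ge 3$; and for a proper subcollection $T\subsetneq\calF(\Delta)$, an endpoint $F_i$ of an arc of $T$ is a leaf with its unique arc-neighbor as joint, since every other intersection collapses to $\bigcap_{k=1}^s F_k$, which lies in every facet (the cases of an isolated facet in $T$, or $|T|=1$, go through for the same reason).

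The genuine gap is the forward direction, which is the entire substance of the theorem and which you only outline. Two claims carry all the weight and neither receives an argument: (i) that in a leaf-free complex all of whose proper subcollections are forests, every facet has exactly two strong neighbors and the strong-neighbor relation forms a single cycle exhausting $\calF(\Delta)$ --- your phrase ``a tree with a linear spine'' for $\Delta$ minus a facet already presupposes this, and even the connectivity of $\Delta\setminus F$ is not automatic from connectivity of $\Delta$ (deleting a facet can, a priori, disconnect a complex); and (ii) that all non-consecutive intersections $F_i\cap F_j$ equal $\bigcap_{k=1}^s F_k$. Worse, your proposed mechanism for (ii) is logically confused: you say a failure of the intersection formula should ``produce a leaf in $\Delta$ or in an honestly proper subcomplex,'' but a leaf in a proper subcomplex is no contradiction at all --- proper subcomplexes of a cycle are \emph{required} to have leaves. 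The only usable contradictions are a leaf of $\Delta$ itself or a proper subcomplex with \emph{no} leaf, and you never exhibit either from the assumption $F_i\cap F_j\supsetneq\bigcap_k F_k$. Executing (i) and (ii) is precisely where the cited proof of Caboara--Faridi--Selinger invests its sequence of preparatory lemmas on strong neighbors; as written, your proposal proves the easy half and defers the hard half.
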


Let $\Delta$ be a simplicial complex. The line graph $L(\Delta):=L(I(\Delta))$ is a finite graph, whose vertices $v_i$ correspond to the facets $F_i$ of $\Delta$ respectively, and $\Set{v_i,v_j}$ is an edge of $G$ if and only if $F_i\cap F_j\ne \emptyset$. If $L(\Delta)$ is a cycle graph, we will call $\Delta$ a \Index{linear cycle}. 

\begin{rem}
  \label{cone} The previous lemma implies that simplicial cycles are either linear cycles or cones over such a structure.  If the intersection $\cap F_k$ is indeed not empty in the previous lemma, we may assume that $\Delta=\cone_v(\Delta')$, where $\Delta'$ is a simplicial complex with vertex set $[n]\setminus \Set{v}$. The $\Delta'$ is a simplicial cycle and the corresponding facet ideals satisfy $I(\Delta)=x_v I(\Delta')S$. Thus $I(\Delta)$ is of linear type if and only if $I(\Delta')$ is so.
\end{rem}

\begin{con}
  \label{push-down}
  Let $\Delta$ be a linear cycle and $I=I(\Delta)\subset S=\KK[x_1,\dots,x_n]$ its facet ideal. When the length is $3$, we additionally assume that the GCD of the monomial generators of $I$ is trivial, i.e., $\Delta$ is not a cone.  Now each indeterminate of the polynomial ring shows in at most two monomial generators of $I$. Consider a subset $D$ (for deletion) of these indeterminates defined as follows.
  \begin{enumerate}[i]
    \item If $x_i$ is a free variable, then $x_i\in D$;
    \item For each pair of monomial generators with a non-trivial common factor $f$, keep one indeterminate (say, $x_i$) of $f$ and take all remaining indeterminates dividing $f$ to be in $D$. In this case, the remaining indeterminates shall be called the \Index{shadows} of $x_i$.
  \end{enumerate}
  Now, write $D^c=\Set{x_1,\dots,x_n}\setminus D$ for the complement set and consider the ring homomorphism $\chi:S\to \KK[D^c]\subset S$ such that $\chi(x_i)=1$ for $x_i\in D$ and $\chi(x_i)=x_i$ for $x_i\in D^c$.  
\end{con}

\begin{rem}
  \label{connected-component}
  Suppose $R_1$ and $R_2$ are affine algebras over a field $\KK$ and let $I_1\subset R_1$ and $I_2\subset R_2$ be ideals. Let
  \[
  I=(I_1,I_2)\subset R=R_1\otimes_\KK R_2.
  \]
  Suppose that
  \[
  0\to L_1 \to R_1[T_1,\dots,T_m] \to R_1[f_1t,\dots,f_mt]\to 0,
  \]
  and
  \[
  0\to L_2 \to R_2[U_1,\dots,U_n] \to R_2[g_1t,\dots,g_nt]\to 0
  \]
  are algebra presentations of the Rees algebras. Then in the following presentation of $\calR(I)$
  \[
  0\to (L_1,L_2,J) \to R[\bdT,\bdU]\to \calR(I)\to 0,
  \]
  the additional generators $J$ can be generated by the obvious Koszul elements: $g_iT_j-f_jU_i$; see \cite[page 133]{MR1275840}.  Thus, to show that a simplicial complex is of linear type, it suffices to show that each of its connected components has this property.
\end{rem}

The following work of Fouli and Lin is a partial generalization of Villarreal's result \cite[Corollary 3.2]{MR1335312}:

\begin{prop}
  [{\cite[Proposition 3.3]{arXiv:1205.3127}}]
  \label{Fouli-Lin}
  Let $S$ be a polynomial ring over a field and let $I$ be a squarefree monomial ideal in $S$. If the line graph $L(I)$ of $I$ is a disjoint union of graphs with a unique odd cycle, then $I$ is an ideal of linear type.
\end{prop}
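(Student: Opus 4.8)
The plan is to chain together the two reductions set up earlier in this section. By Remark~\ref{connected-component} the connected components of $L(I)$ partition $G(I)$ into blocks built on pairwise disjoint variables, and it suffices to prove that the ideal of each block is of linear type; so I may assume $L(I)$ is connected with a unique cycle, which is odd. I will then peel off generators one at a time as $M$-elements via Corollary~\ref{res0} until only the odd cycle survives, and dispose of that residual cycle by the simplicial-cycle machinery.

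The crucial local fact is that \emph{a degree-one vertex of $L(I)$ is an $M$-element}. Suppose $v_1$ has a single neighbor $v_2$. Any variable that divides $f_1$ and some other generator $f_j$ produces an edge $\{v_1,v_j\}$, hence forces $j=2$; thus every variable of $f_1$ is either free or divides $\gcd(f_1,f_2)$. Moreover $f_1\nmid f_2$ by minimality of $G(I)$, so (as $f_1$ is squarefree) at least one variable of $f_1$ is free. Ordering the variables of $f_1$ with the free ones smallest and those of $\gcd(f_1,f_2)$ largest now verifies Definition~\ref{M-element}: for a non-free variable $x_k$ the suffix $x_k\cdots x_r$ is a product of variables of $\gcd(f_1,f_2)$ and so divides $f_2$, while for a free variable the clause ``$x_k\mid f_j$ with $1<j$'' is vacuous.

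With this I induct on the number $s$ of generators. If $L(I)$ has a leaf $v_1$, then $f_1$ is an $M$-element; deleting it yields $I'=\braket{f_2,\dots,f_s}$ with $L(I')=L(I)-v_1$ again connected and carrying the same odd cycle, so $I'$ is of linear type by the inductive hypothesis, and Corollary~\ref{res0} promotes this to $I$. Deleting a leaf preserves connectivity and the unique cycle, so the pruning terminates exactly when $L(I)$ has minimum degree at least two; a connected graph with a unique cycle and no leaf is that cycle itself, leaving a single odd cycle.

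The residual case — the facet ideal of a linear odd cycle — is where the real difficulty sits, and I expect it to be the main obstacle. Pruning cannot enter the cycle: it has no degree-one vertex, and indeed no generator of a cycle is an $M$-element, since each shares variables with two distinct neighbors and these two disjoint common factors cannot both be placed at the top of an admissible order. A different argument is therefore needed. I would push the ideal down via the homomorphism $\chi$ of Construction~\ref{push-down}, collapsing each common factor to a single variable, to reach the classical odd-cycle edge ideal $\braket{x_1x_2,x_2x_3,\dots,x_{2k+1}x_1}$, which is of linear type by Villarreal~\cite[Corollary~3.2]{MR1335312}, and then verify that $\chi$ transports the linear-type property back to $I$. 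Confirming that this push-down faithfully preserves linear type for odd cycles is the delicate point that the simplicial-cycle results of this section are meant to supply.
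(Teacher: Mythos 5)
Your two reductions are correct and coincide with the paper's first step: splitting into connected components is Remark~\ref{connected-component}, and your observation that a degree-one vertex of $L(I)$ corresponds to an $M$-element, so that leaves can be pruned by Corollary~\ref{res0}, is exactly Remark~\ref{tree}. The gap is in the residual case of a linear odd cycle, which -- as you say yourself -- is where the entire difficulty sits. Your plan is to collapse $I$ by the map $\chi$ of Construction~\ref{push-down} onto the classical odd-cycle edge ideal $\braket{x_1x_2,x_2x_3,\dots,x_{2k+1}x_1}$ and then ``transport the linear-type property back to $I$.'' But the only transport result available, Proposition~\ref{unwrap_cone}, goes in the \emph{opposite} direction: it says that if the original ideal is of linear type, then so is its specialization at $x_{n+1}\to 1$. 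That direction is useful for negative results (it is how the paper proves Proposition~\ref{even_cycles}, that even simplicial cycles are \emph{not} of linear type), but it cannot produce the positive statement you need. Worse, the converse transport you are invoking is false in general: the remark following Proposition~\ref{unwrap_cone}, with Example~\ref{exam3} as counterexample, shows that even the simplest inverse operation -- inserting a single free variable into one generator of a linear-type ideal -- can destroy linear type. So the simplicial-cycle results of that section do not supply your ``delicate point''; nothing in the paper does, and your proof is incomplete precisely at its core.

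What the paper does instead is prove a structure theorem for the defining ideal of the original ideal $I$ itself (Proposition~\ref{Vil3.1}, adapting Villarreal's Theorem 3.1): when $L(I)$ is a cycle, $J$ is generated by $J_1$ together with the relations $\bdT_{\bdalpha,\bdbeta}$ satisfying $\chi(\bdf_\bdalpha)=\chi(\bdf_\bdbeta)$. The odd case is then killed by a parity count, not by transport. If such a relation (with disjoint index sets) involved all generators of the cycle, one checks that the distinct monomials among $f_{i_1},\dots,f_{i_k}$, say $g_1,\dots,g_l$, must be pairwise coprime, and that $\chi(g_1\cdots g_l)=\prod_{x\in D^c}x$; this forces $2l=\#D^c$, which is impossible since $\#D^c$ equals the odd length of the cycle. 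Hence every generator of $J$ of degree $\ge 2$ involves only a proper subset of the generators of $I$, whose line graph is a forest and whose ideal is therefore of linear type, so each such generator is already a combination of linear forms. To complete your outline you would have to replace your transport step by an argument of this kind: the collapsed classical cycle can guide the combinatorics (it is exactly what $\chi$ and $D^c$ encode), but it cannot by itself certify that $I$ is of linear type.
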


\begin{cor}
  \label{odd_cycles}
  Let $\Delta$ be a simplicial cycle of odd length $s\ge 3$.  Then the facet ideal $I=I(\Delta)$ is of linear type.
\end{cor}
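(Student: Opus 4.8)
The plan is to reduce to the case of a linear cycle and then quote Proposition \ref{Fouli-Lin} directly. First I would dispose of the cone case using Remark \ref{cone}. If the total intersection $\bigcap_{k=1}^{s} F_k$ is nonempty, then $\Delta=\cone_v(\Delta')$ for some vertex $v$, where $\Delta'$ is again a simplicial cycle with the \emph{same} number $s$ of facets (coning adds $v$ to every facet and so changes neither the count nor the parity) but strictly fewer vertices, and $I(\Delta)=x_v I(\Delta')S$ is of linear type if and only if $I(\Delta')$ is. Iterating this reduction strictly decreases the number of vertices at each step, so it terminates, and we may assume from the outset that $\bigcap_{k=1}^{s}F_k=\emptyset$; that is, $\Delta$ is a \Index{linear cycle} of odd length $s\ge 3$.

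Next I would identify the line graph $L(\Delta)=L(I(\Delta))$. Writing the facets as a sequence of strong neighbors $F_1\sim_\Delta F_2\sim_\Delta\cdots\sim_\Delta F_s\sim_\Delta F_1$ as furnished by Lemma \ref{simpl_cycle}, consecutive facets meet, whereas for nonconsecutive indices one has $F_i\cap F_j=\bigcap_{k=1}^{s}F_k=\emptyset$. Hence $\Set{v_i,v_j}$ is an edge of $L(\Delta)$ precisely when $i$ and $j$ are consecutive modulo $s$, so $L(\Delta)$ is exactly the cycle graph on $s$ vertices. Since $s$ is odd, this graph is connected and its unique cycle---hence its unique odd cycle---is the whole graph itself. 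Therefore $L(I)$ is a (single-component) disjoint union of a graph with a unique odd cycle, and Proposition \ref{Fouli-Lin} yields that $I=I(\Delta)$ is of linear type.

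The argument is essentially a bookkeeping exercise layered on top of Proposition \ref{Fouli-Lin}; the only genuinely substantive ingredients are the cone reduction of Remark \ref{cone}, which lets us assume the total intersection is empty, and the verification via Lemma \ref{simpl_cycle} that the line graph of a linear cycle is precisely the $s$-cycle. The one point that warrants care is the smallest case $s=3$: a length-$3$ simplicial cycle can be a cone, so one must check that after stripping off cone points the remaining non-cone configuration still has all three facets pairwise meeting with empty triple intersection, giving the triangle (an odd cycle) rather than a degenerate graph. Beyond that, I expect no real obstacle.
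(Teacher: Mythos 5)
Your proposal is correct and takes exactly the same route as the paper, whose entire proof reads ``This follows from Proposition \ref{Fouli-Lin} and Remark \ref{cone}.'' You have simply spelled out the details the paper leaves implicit: iterating the cone reduction of Remark \ref{cone}, identifying the line graph of the resulting linear cycle as the odd cycle graph $C_s$ via Lemma \ref{simpl_cycle} (with the $s=3$ case handled because strong neighbors necessarily intersect), and then invoking Proposition \ref{Fouli-Lin}.
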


\begin{proof}
  This follows from Proposition \ref{Fouli-Lin} and Remark \ref{cone}.
\end{proof}

Let $\calI_k$ be the set of non-decreasing sequence of integers in $\Set{1,2,\dots,s}$ of length $k$. If $\bdalpha=(i_1,i_2,\dots,i_k)\in \calI_k$, set $\bdf_\bdalpha=f_{i_1}\cdots f_{i_k}$ and $\bdT_\bdalpha=T_{i_1}\cdots T_{i_k}$. For every $\bdalpha,\bdbeta\in \calI_k$, set
  \[
  \bdT_{\bdalpha,\bdbeta}=\frac{\bdf_\bdbeta}{\gcd(\bdf_\bdalpha,\bdf_\bdbeta)}
  \bdT_\bdalpha-\frac{\bdf_\bdalpha}{\gcd(\bdf_\bdalpha,\bdf_\bdbeta)}\bdT_\bdbeta.
  \]
  It is well-known that the defining ideal $J$ is generated by these $\bdT_{\bdalpha,\bdbeta}$'s with $\bdalpha,\bdbeta\in \calI_k$ and $k\ge 1$ (cf.~ \cite{MR2611561}). Notice that when $k=1$, $\bdalpha=i$ and $\bdbeta=j$, then $\bdT_{\bdalpha,\bdbeta}=-l_{j,i}$ which is defined before Proposition \ref{CDN}.

\begin{prop}
  [Unwrapping local cones preserves linear-type property]
  \label{unwrap_cone}
  Let $\widetilde{I}$ be a squarefree monomial ideal in $\widetilde{S}=S[x_{n+1}]$ with minimal monomial generators $G(\widetilde{I})=\Set{\widetilde{f}_1,\dots,\widetilde{f}_s}$.  Substitute every $x_{n+1}$ with $1$:
  \[
  f_{i}=\left.\widetilde{f}_i\right|_{x_{n+1}\to 1} \quad \text{for} \quad 1\le i \le s,
  \]
  and consider the corresponding ideal $I=\braket{f_1,\dots,f_s}$ in $S=\KK[x_1,\dots,x_n]$.  If $\widetilde{I}$ is of linear type, so is $I$. 
\end{prop}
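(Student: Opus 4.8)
The plan is to realize the passage from $\widetilde{I}$ to $I$ as a coefficient specialization of the defining ideals, and then to check that this specialization carries the canonical binomial generators of the defining ideal of $\calR(\widetilde I)$ exactly onto those of the defining ideal of $\calR(I)$. Write $\widetilde J\subset\widetilde S[\bdT]$ for the defining ideal of $\calR(\widetilde I)$ attached to $\widetilde\psi(T_i)=\widetilde f_i t$, and $J\subset S[\bdT]$ for that of $\calR(I)$ attached to $\psi(T_i)=f_i t$. I would first introduce the ring homomorphism $\Phi\colon\widetilde S[\bdT]\to S[\bdT]$ determined by $\Phi(x_{n+1})=1$, $\Phi(x_j)=x_j$ for $j\le n$, and $\Phi(T_i)=T_i$. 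This $\Phi$ is surjective and preserves the $\bdT$-grading, so it suffices to understand how it acts on generators.

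The heart of the argument is the identity $\Phi(\widetilde{\bdT}_{\bdalpha,\bdbeta})=\bdT_{\bdalpha,\bdbeta}$ for all $\bdalpha,\bdbeta\in\calI_k$ and $k\ge 1$. To establish it, I would use that each $\widetilde f_i$ is squarefree, so $x_{n+1}$ divides $\widetilde f_i$ to exponent $0$ or $1$ and $\widetilde f_i=x_{n+1}^{\epsilon_i}f_i$ with $f_i$ free of $x_{n+1}$. Hence $\widetilde{\bdf}_\bdalpha=x_{n+1}^{a}\bdf_\bdalpha$ and $\widetilde{\bdf}_\bdbeta=x_{n+1}^{b}\bdf_\bdbeta$ for suitable $a,b\ge 0$, and since $x_{n+1}$ occurs in neither $\bdf_\bdalpha$ nor $\bdf_\bdbeta$ the gcd splits as $\gcd(\widetilde{\bdf}_\bdalpha,\widetilde{\bdf}_\bdbeta)=x_{n+1}^{\min(a,b)}\gcd(\bdf_\bdalpha,\bdf_\bdbeta)$. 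Dividing and then substituting $x_{n+1}=1$ kills the surviving nonnegative power of $x_{n+1}$ in each coefficient and returns exactly $\bdf_\bdbeta/\gcd(\bdf_\bdalpha,\bdf_\bdbeta)$ and $\bdf_\bdalpha/\gcd(\bdf_\bdalpha,\bdf_\bdbeta)$, which proves the identity. Using the binomial generation of the defining ideal recalled just before the statement, together with the fact that a surjective ring homomorphism sends $\langle g_\lambda\rangle$ to $\langle\Phi(g_\lambda)\rangle$, I would then conclude $\Phi(\widetilde J)=\Phi(\langle\widetilde{\bdT}_{\bdalpha,\bdbeta}\rangle)=\langle\bdT_{\bdalpha,\bdbeta}\rangle=J$.

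With $\Phi(\widetilde J)=J$ in hand, the linear-type implication is immediate. If $\widetilde I$ is of linear type then $\widetilde J=\langle\widetilde J_1\rangle$, so $J=\Phi(\widetilde J)=\langle\Phi(\widetilde J_1)\rangle$. Because $\Phi$ preserves the $\bdT$-degree, $\Phi(\widetilde J_1)\subseteq J_1$, whence $J=\langle\Phi(\widetilde J_1)\rangle\subseteq\langle J_1\rangle\subseteq J$ and therefore $J=\langle J_1\rangle$; that is, $I$ is of linear type.

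I expect the main subtlety to be bookkeeping rather than conceptual depth. The squarefree computation of the gcd under $x_{n+1}\mapsto 1$ must be handled carefully, but the genuinely delicate point is that the monomials $f_1,\dots,f_s$ need not be a \emph{minimal} generating set of $I$, since setting $x_{n+1}=1$ can make one $f_i$ divide, or even equal, another. I would address this by noting that both the criterion $J=\langle J_1\rangle$ for linear type and the binomial generation by the $\bdT_{\bdalpha,\bdbeta}$ are valid for an arbitrary monomial generating set: the algebras $\Sym(I)$ and $\calR(I)$, and hence the property that the canonical map between them is an isomorphism, are intrinsic to $I$ and independent of the chosen generators. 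Once this is acknowledged, the index set $\calI_k$ is literally the same for $\widetilde I$ and $I$, and the generator-to-generator identity $\Phi(\widetilde{\bdT}_{\bdalpha,\bdbeta})=\bdT_{\bdalpha,\bdbeta}$ goes through verbatim.
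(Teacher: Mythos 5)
Your proof is correct and takes essentially the same route as the paper: both arguments hinge on the identity $\left.\widetilde{\bdT}_{\bdalpha,\bdbeta}\right|_{x_{n+1}\to 1}=\bdT_{\bdalpha,\bdbeta}$ (coming from the splitting off of $x_{n+1}$-powers in the gcd) together with Taylor's binomial generation of the defining ideal, and you merely package the substitution as a surjection $\Phi$ with $\Phi(\widetilde{J})=J$ where the paper carries out the same specialization element by element on an expression $\widetilde{\bdT}_{\bdalpha,\bdbeta}=\sum g_{i,j}\widetilde{\bdT}_{i,j}$. Your closing observation that $f_1,\dots,f_s$ may fail to be a minimal generating set of $I$ is a point the paper leaves implicit, and you dispose of it correctly by noting that both the criterion $J=\braket{J_1}$ and the Taylor generators are valid for an arbitrary monomial generating set.
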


\begin{proof}
  Consider the presentation 
  \[
  \psi: S[\bdT] \to \calR(I)
  \]
  defined by setting $\psi(T_i)=f_i t$ and denote by $J$ the kernel of $\psi$.  Similarly, we consider the presentation
  \[
  \widetilde{\psi}:\widetilde{S}[\bdT] \to \calR(\widetilde{I})
  \]
  defined by setting $\widetilde{\psi}(T_i)=f_it$ and denote by $\widetilde{J}$ the kernel of $\widetilde{\psi}$.

  Corresponding to $\widetilde{I}$, we can similarly define $\widetilde{\bdf}_\bdalpha$ and $\widetilde{\bdT}_{\bdalpha,\bdbeta}$.  
  Now, take any $\bdT_{\bdalpha,\bdbeta}\in J$ and consider the corresponding $\widetilde{\bdT}_{\bdalpha,\bdbeta}$ in $\widetilde{J}$.  Notice that $\widetilde{\bdf}_\bdalpha$ is the multiplication of $\bdf_\bdalpha$ with some power of $x_{n+1}$. Thus, $\bdf_\bdalpha=\left.\widetilde{\bdf}_\bdalpha\right|_{x_{n+1}\to 1}$.  Consequently, 
  \begin{align*}
    \left.\widetilde{\bdT}_{\bdalpha,\bdbeta}\right|_{x_{n+1}\to 1} = &  \left.
    \frac{\widetilde{\bdf}_\bdbeta}{\gcd(\widetilde{\bdf}_\bdalpha,\widetilde{\bdf}_\bdbeta)}\right|_{x_{n+1}\to
    1}\bdT_\bdalpha-\left.\frac{\widetilde{\bdf}_\bdalpha}{\gcd(\widetilde{\bdf}_\bdalpha,\widetilde{\bdf}_\bdbeta)}\right|_{x_{n+1}\to
    1}\bdT_\bdbeta \\
    = & \frac{\bdf_\bdbeta}{\gcd(\bdf_\bdalpha,\bdf_\bdbeta)}\bdT_\bdalpha-\frac{
    \bdf_\bdalpha}{\gcd(\bdf_\bdalpha,\bdf_\bdbeta)}\bdT_\bdbeta =  \bdT_{\bdalpha,\bdbeta}.
  \end{align*}

  On the other hand, since $\widetilde{\bdT}_{\bdalpha,\bdbeta}\in \widetilde{J}$, it can be generated in
  $\widetilde{S}[\bdT]$ by the
  linear parts of $\widetilde{J}$:
  \[
  \widetilde{\bdT}_{\bdalpha,\bdbeta}=\sum_{1\le i<j \le s} g_{i,j}\widetilde{\bdT}_{i,j}
  \quad\text{with} \quad g_{i,j}\in \widetilde{S}[\bdT].
  \]
  Now substitute $x_{n+1}$ with $1$ and we have
  \begin{align*}
    \bdT_{\bdalpha,\bdbeta}=&\left.\widetilde{\bdT}_{\bdalpha,\bdbeta}\right|_{x_{n+1}\to 1}
    =\sum_{1\le i<j \le s}\left. g_{i,j}\right|_{x_{n+1}\to 1}
    \left.\widetilde{\bdT}_{i,j}\right|_{x_{n+1}\to 1} \\
    = &\sum_{1\le i<j \le s}\left. g_{i,j}\right|_{x_{n+1}\to 1} {\bdT}_{i,j}
    \quad\text{with} \quad \left.g_{i,j}\right|_{x_{n+1}\to 1} \in S[\bdT].
  \end{align*}
  Thus, $\bdT_{\bdalpha,\bdbeta}$ can be generated in $S[\bdT]$ by the
  linear parts of $J$. This shows that $I$ is of linear type.
\end{proof}

\begin{rem}
The following partial converse of Proposition \ref{unwrap_cone} regarding inserting a free variable is incorrect. 
  \begin{center}
    \begin{minipage}{14cm}
      \it If $I$ is a squarefree monomial ideal in $S=\KK[x_1,\dots,x_n]$ of linear type with minimal monomial generators $G(I)=\Set{f_1,\dots,f_s}$, then the ideal $\widetilde{I}=\braket{f_1,\dots,f_{s-1},f_s\fbox{$x_{n+1}$}}$ in $S'=S[x_{n+1}]$ is also of linear type.
    \end{minipage}
  \end{center}
For a counterexample, one can compare the ideals $I$ and $\widetilde{I}$ in Example \ref{exam3}.
\end{rem}

\begin{prop}
  \label{even_cycles}
  Let $\Delta$ be a simplicial cycle of even length $s\ge 4$.  Then the facet ideal $I=I(\Delta)$ is not of linear type.
\end{prop}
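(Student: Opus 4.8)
The plan is to peel $\Delta$ down to the classical even cycle by the reductions already available in the paper, and then to exhibit one explicit binomial of the defining ideal that cannot be generated by the linear syzygies, detecting this by a single grading count.

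\emph{Reduction steps.} First, by Remark \ref{cone} a simplicial cycle is either a linear cycle or a cone over one, and coning changes neither the length nor the linear-type property; so I may assume $\Delta$ is a linear cycle of even length $s=2m$. Then $L(\Delta)$ is the cycle graph $C_s$: adjacent facets meet in a nonempty junction $A_i:=F_i\cap F_{i+1}$, non-adjacent facets are disjoint, and since $s\ge 4$ each facet decomposes as $F_i=A_{i-1}\sqcup B_i\sqcup A_i$ with the $A_j$ pairwise disjoint and $B_i$ the free vertices of $F_i$. Running the push-down of Construction \ref{push-down} one variable at a time — setting every free variable and every shadow to $1$ — sends each generator $\bdx_{F_i}$ to $a_{i-1}a_i$, where $a_j$ is the variable kept from $A_j$; the result is exactly the edge ideal of the classical cycle $C_s$ on the distinct vertices $a_1,\dots,a_s$. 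Each single substitution is an instance of Proposition \ref{unwrap_cone}, whose contrapositive reads: if the image fails to be of linear type, so does the source. Hence it suffices to treat the edge ideal $I_0=\braket{y_1y_2,\dots,y_{s-1}y_s,y_sy_1}$ of $C_s$. I expect the bookkeeping of this reduction — checking that the push-down lands precisely on $C_s$ and that every intermediate substitution satisfies the hypotheses of Proposition \ref{unwrap_cone} — to be the fiddliest point; the non-membership below is then a clean degree estimate.

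\emph{The obstruction for $C_s$.} Because $s=2m$ is even, the two alternating products of generators agree:
\[
\prod_{i\text{ odd}}(y_iy_{i+1})=y_1y_2\cdots y_s=\prod_{i\text{ even}}(y_iy_{i+1}),
\]
so the binomial
\[
g:=T_1T_3\cdots T_{2m-1}-T_2T_4\cdots T_{2m}
\]
lies in the defining ideal $J$ of $\calR(I_0)$; in the notation introduced before Proposition \ref{even_cycles} it is $\bdT_{\bdalpha,\bdbeta}$ for $\bdalpha=(1,3,\dots,2m-1)$ and $\bdbeta=(2,4,\dots,2m)$. The goal is to prove $g\notin\braket{J_1}$, which forces $J\ne\braket{J_1}$ and hence that $I_0$, and therefore the original $I$, is not of linear type.

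\emph{Non-membership via a grading.} Grade $S[\bdT]$ by total $x$-degree, giving each $y_j$ degree $1$ and each $T_i$ degree $\deg f_i=2$; inherited from $S[t]$, this grading makes $J$ and the ideal $\braket{J_1}=\braket{l_{ij}\mid i<j}$ generated by the linear syzygies homogeneous. Both monomials of $g$ have $T$-degree $m$ and total $x$-degree $2m$. Every generator $l_{ij}=m_{ij}T_j-m_{ji}T_i$ has $T$-degree $1$ and total $x$-degree at least $3$: for adjacent indices $l_{i,i+1}=y_iT_{i+1}-y_{i+2}T_i$ has total degree $1+2=3$, and for non-adjacent indices $l_{ij}=f_iT_j-f_jT_i$ has total degree $2+2=4$. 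Hence any element of $\braket{J_1}$ homogeneous of $T$-degree $m$ is a combination $\sum_{i<j}h_{ij}l_{ij}$ with each $h_{ij}$ of $T$-degree $m-1$, so of total $x$-degree at least $2(m-1)$; each nonzero term $h_{ij}l_{ij}$ then has total $x$-degree at least $2(m-1)+3=2m+1$. Since $g$ has total $x$-degree $2m<2m+1$ and both gradings are respected, $g$ cannot lie in $\braket{J_1}$. As $m\ge 2$ (because $s\ge 4$), this proves the base case, and unwinding the two reductions shows that the facet ideal of every even simplicial cycle fails to be of linear type. (Alternatively, one may invoke the even-cycle half of Villarreal's \cite[Corollary 3.2]{MR1335312} for $C_s$ directly.)
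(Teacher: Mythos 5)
Your proof is correct, and its skeleton --- reduce by Remark \ref{cone} to a linear cycle, then push down to the edge ideal of the classical cycle $C_s$ via Construction \ref{push-down} and the contrapositive of Proposition \ref{unwrap_cone} --- is exactly the paper's proof. The only divergence is the endgame: the paper simply cites Villarreal's \cite[Corollary 3.2]{MR1335312} for the fact that the edge ideal of an even cycle is not of linear type (so your parenthetical alternative \emph{is} the paper's argument verbatim), whereas you prove that fact from scratch by exhibiting the alternating binomial $g=T_1T_3\cdots T_{2m-1}-T_2T_4\cdots T_{2m}$ and ruling out $g\in\braket{J_1}$ by a bidegree count. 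That count is sound: $J$ and $\braket{J_1}$ are bihomogeneous for the $(x,T)$-bigrading with $\deg T_i=(2,1)$; the generators $l_{ij}$ of $\braket{J_1}$ have bidegree $(3,1)$ (adjacent indices) or $(4,1)$ (non-adjacent), so any nonzero element of $\braket{J_1}$ of $T$-degree $m$ has $x$-degree at least $2(m-1)+3=2m+1$, while $g$ has bidegree $(2m,m)$. Two small remarks. First, the identification $\braket{J_1}=\braket{l_{ij}\mid i<j}$ that your count relies on is the Taylor-complex fact the paper records just before Proposition \ref{Fouli-Lin} (take the $T$-degree one part of the $\bdT_{\bdalpha,\bdbeta}$ generation). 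Second, the intermediate-step bookkeeping you flag as fiddly is harmless: after any partial substitution each generator still contains its two kept junction variables $a_{i-1},a_i$, which appear in no other generator, so the generating sets stay minimal and Proposition \ref{unwrap_cone} applies at every step. What your version buys is self-containedness and an explicit witness of failure --- the same binomial $\bdT_w$ that Proposition \ref{even_generators} later identifies as the one essential non-linear generator for even cycles; what the paper's citation buys is brevity.
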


\begin{proof}
  After Remark \ref{cone}, we may assume that the line graph $G$ of $I$ is a cycle of length $s$. 
  Now, apply Proposition \ref{unwrap_cone} inductively and use the map $\chi$ in Construction \ref{push-down} to substitute the variables in $D$ with $1$. We are reduced to the case where the squarefree monomial ideal is the edge ideal of a cycle graph $C_s$. This ideal is not of linear type by \cite[Corollary 3.2]{MR1335312}. Thus, Proposition \ref{unwrap_cone} implies that $I$ is not of linear type. 
\end{proof}

\section{Revisiting Villarreal's results}
In this section, we generalize some of Villarreal's results and ideas to squarefree monomial ideals of higher degrees. In particular, we will reprove Proposition \ref{Fouli-Lin} of Fouli and Lin.

\begin{prop}
  [Essentially Villarreal {\cite[Theorem 3.1]{MR1335312}}]
  \label{Vil3.1}
  Suppose the line graph $L(I)$ of the squarefree monomial ideal $I$ is a cycle graph. Take the variable set $D$ and the map $\chi$ in Construction \ref{push-down}. Then the defining ideal $J$ of $\calR(I)$ satisfies
  \[
 J= S[\bdT]J_1+S[\bdT]\cdot (\cup_{k=2}^{\infty} P_k),
  \]
  where
  \[
  P_k=\Set{\bdT_{\bdalpha,\bdbeta}| \chi(\bdf_\bdalpha)=\chi(\bdf_\bdbeta), \text{ for some $\bdalpha,\bdbeta\in\calI_k$}}.
  \]
\end{prop}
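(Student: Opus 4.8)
The inclusion $\supseteq$ is clear: $J_1\subseteq J$ by definition, and every element of $P_k$ is one of the binomials $\bdT_{\bdalpha,\bdbeta}$, hence lies in $J$. For the reverse inclusion recall (as noted just above) that $J$ is generated by all the $\bdT_{\bdalpha,\bdbeta}$ with $\bdalpha,\bdbeta\in\calI_k$, $k\ge 1$. The case $k=1$ gives, up to sign, the linear syzygies $l_{ij}\in J_1$, so it suffices to prove that every generator $\bdT_{\bdalpha,\bdbeta}$ with $k\ge 2$ lies in $S[\bdT]J_1+S[\bdT]\cdot(\cup_k P_k)$.

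First I record the combinatorial picture coming from Construction \ref{push-down}. Relabel the generators so that $L(I)$ is the cycle with consecutive intersections, and set $g_i=\gcd(f_i,f_{i+1})$ (indices read cyclically mod $s$). Since $L(I)$ is a cycle, every variable divides at most two of the $f_i$, and consecutive $g_i,g_{i+1}$ are coprime (a common variable would divide three generators; the length-$3$ non-cone hypothesis of Construction \ref{push-down} covers $s=3$). Writing $y_i$ for the kept variable of $g_i$, the map $\chi$ kills all free and shadow variables and fixes the $y_i$, so $\chi(g_i)=y_i$ and $\chi(f_i)=y_{i-1}y_i$; thus $\chi(I)$ is exactly the edge ideal of the $s$-cycle on $y_1,\dots,y_s$. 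Now write $\bdT_{\bdalpha,\bdbeta}=M\bdT_\bdalpha-N\bdT_\bdbeta$ with $M=\bdf_\bdbeta/g$, $N=\bdf_\bdalpha/g$, $g=\gcd(\bdf_\bdalpha,\bdf_\bdbeta)$; then $\gcd(M,N)=1$. Since $\chi(g)$ divides both $\chi(\bdf_\bdalpha)$ and $\chi(\bdf_\bdbeta)$, one has $\chi(\bdf_\bdalpha)=\chi(\bdf_\bdbeta)$ iff $\chi(M)=\chi(N)$; because $M,N$ are coprime and $\chi$ only deletes variables outside $D^c$, the monomials $\chi(M),\chi(N)$ are again coprime, so this forces $\chi(M)=\chi(N)=1$. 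Hence $\bdT_{\bdalpha,\bdbeta}\in P_k$ exactly when neither coefficient $M$ nor $N$ is divisible by a kept variable.

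This suggests inducting on the ``kept-variable content''
\[
w(\bdalpha,\bdbeta)=\deg\chi(M)+\deg\chi(N)=\sum_{i=1}^{s}\bigl|\deg_{y_i}\chi(\bdf_\bdalpha)-\deg_{y_i}\chi(\bdf_\bdbeta)\bigr|.
\]
If $w=0$ the relation is balanced, i.e.\ $\bdT_{\bdalpha,\bdbeta}\in P_k$, and we are done. If $w>0$, some $y_i$ divides $M$ or $N$, say $y_i\mid N=\bdf_\bdalpha/g$, so $\deg_{y_i}\chi(\bdf_\bdalpha)>\deg_{y_i}\chi(\bdf_\bdbeta)$; as $y_i$ divides only $f_i,f_{i+1}$, one of $T_i,T_{i+1}$ occurs in $\bdT_\bdalpha$. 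One then uses an \emph{adjacent} linear syzygy away from $y_i$ — the relation $l_{i-1,i}$ if $T_i\mid\bdT_\bdalpha$, or $l_{i+1,i+2}$ if $T_{i+1}\mid\bdT_\bdalpha$ — to rewrite $M\bdT_\bdalpha$ modulo $S[\bdT]J_1$ as a monomial times $\bdT_{\bdalpha^{*}}$, where $\bdalpha^{*}$ replaces the chosen index by its neighbour. Concretely, using $(f_{i-1}/g_{i-1})\bdT_\bdalpha\equiv (f_i/g_{i-1})\bdT_{\bdalpha^{*}}\pmod{S[\bdT]J_1}$ one checks $\chi(\bdf_{\bdalpha^{*}})=\chi(\bdf_\bdalpha)\cdot y_{i-2}/y_i$, so the excess at $y_i$ drops by one while reappearing one step further along the cycle. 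The rewritten element is an $S[\bdT]$-multiple of a generator $\bdT_{\bdalpha^{*},\bdbeta}$, and iterating transports each unit of imbalance until it meets a vertex of deficit (the total imbalance across the bipartition of an even cycle is automatically matched, since each edge meets each side once, so $w$ can always be driven to $0$). The induction then closes.

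The main obstacle is making this rewriting step rigorous. Two points require care. First, to rewrite $M\bdT_\bdalpha$ by a chosen syzygy one needs a genuine monomial multiplier, i.e.\ a divisibility such as $(f_{i-1}/g_{i-1})\mid M$; this is exactly where the two structural facts about a linear cycle — each variable lying in at most two generators, and consecutive $g_i$ being coprime — must be exploited to pin the cancelled variable $y_i$ to the unique pair $f_i,f_{i+1}$ and hence to a single syzygy. Second, a single swap merely transports imbalance around the cycle and need not lower $w$, so the induction has to be organized on a refined measure (a transport-distance potential, or equivalently a $\bdT$-dominant monomial order with termination guaranteeing that an irreducible binomial has $w=0$). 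Establishing this strict-decrease/termination, together with the divisibility bookkeeping, is the technical heart of the argument and is precisely the point where Villarreal's original reasoning for the cycle graph is generalized to the higher-degree setting through the specialization $\chi$.
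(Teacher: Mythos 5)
Your preparatory observations are correct (each variable lies in at most two generators, $\chi(f_i)=y_{i-1}y_i$, and $\bdT_{\bdalpha,\bdbeta}\in P_k$ exactly when both coefficients of the binomial are $\chi$-trivial), but the reduction of an unbalanced relation is where the proof has to happen, and there your argument has two genuine gaps that you flag yourself and do not close. First, the rewriting step is not legitimate as stated: to replace $M\bdT_\bdalpha$ by a monomial times $\bdT_{\bdalpha^{*}}$ using $l_{i-1,i}$ you need the divisibility $\bigl(f_{i-1}/\gcd(f_{i-1},f_i)\bigr)\mid M$, and this can simply fail. For instance, if neither $i-1$ nor $i-2$ occurs in $\bdbeta$, then the free variables of $f_{i-1}$ and the kept variable of $\gcd(f_{i-2},f_{i-1})$ have degree zero in $M=\lcm(\bdf_\bdalpha,\bdf_\bdbeta)/\bdf_\bdalpha$, while they divide $f_{i-1}/\gcd(f_{i-1},f_i)$; so the ``adjacent syzygy'' you want to apply is in general not applicable, and proving when it is applicable is not bookkeeping but the substance of the proposition. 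Second, even granting the rewriting, you concede that a single swap only transports the imbalance around the cycle and need not decrease $w$, and the remedies you name (a transport-distance potential, a suitable monomial order) are never constructed; an induction whose measure is admitted not to decrease is not a proof. The parenthetical appeal to the bipartition of an even cycle is not an argument, and the proposition also covers odd cycles.

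The paper's proof (following Villarreal) avoids both problems with a single decomposition and induction on the $\bdT$-degree $k$, which does decrease. Given an unbalanced $\bdT_{\bdalpha,\bdbeta}$, choose $x\in D^c$ with $\deg_x\bdf_\bdbeta>\deg_x\bdf_\bdalpha$ and $x\mid f_{j_m}$ for some $j_m$ in $\bdbeta$; let $y$ be the other kept variable of $f_{j_m}$, and let $f_{i_l}$ be the other generator containing $y$ if it occurs among the $f_{i_p}$, otherwise $l=1$. One then checks variable by variable (kept variables via $\chi$, shadow variables because their degree agrees with that of their kept partner, free variables trivially) the key divisibility $f_{j_m}\bdf_{\bdalpha\setminus l}\mid\lcm(\bdf_\bdalpha,\bdf_\bdbeta)$. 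Writing $\lcm(\bdf_\bdalpha,\bdf_\bdbeta)=A\,f_{j_m}\bdf_{\bdalpha\setminus l}$, this yields
\[
\bdT_{\bdalpha,\bdbeta}=\mu\,\bdT_{i_l,j_m}\bdT_{\bdalpha\setminus l}+\lambda\,\bdT_{\bdalpha\setminus l,\bdbeta\setminus m}T_{j_m}\in S[\bdT]_{k-1}J_1+S[\bdT]_1J_{k-1},
\]
where $\mu,\lambda\in S$ exist because $\bdT_{i_l,j_m}$ is the minimal relation between $f_{i_l}t$ and $f_{j_m}t$. Induction on $k$ then closes the argument immediately, with no termination issue at all. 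If you want to salvage your transport scheme, the divisibility lemma you need is exactly the one above; once you have it, the degree-lowering split makes the question of driving $w$ to zero moot.
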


\begin{proof}
  When the length of the cycle is $3$, namely the line graph is a triangle, we may assume that the GCD of the three monomial generators is trivial. Now, it suffices to consider $\bdT_{\bdalpha,\bdbeta}$, where $\bdalpha=(i_1,\dots,i_k),\bdbeta=(j_1,\dots,j_k)\in\calI_k$ and $k\ge 2$. We only need to show that 
  \[
  \bdT_{\bdalpha,\bdbeta}\in S[\bdT]_1 J_{k-1}+S[\bdT]_{k-1}J_1+S P_{k}.
  \]
  Since the case when $\chi(\bdf_\bdalpha)=\chi(\bdf_\bdbeta)$ is trivial, we may assume that $\chi(\bdf_\bdalpha)\ne\chi(\bdf_\bdbeta)$. Thus, we may further assume that there is an $x\in D^c$ such that $x^a || \bdf_{\bdalpha}$ and $x^{b} || \bdf_\bdbeta$ for $b>a\ge 0$. Say, for instance, $x|f_{j_m}$. Then $\chi(f_{j_m})=xy$ for another $y\in D^c$. The variable $y$ belongs to exactly two monomial generators, one of which is $f_{j_m}$. If the other generator belongs to $\Set{f_{i_1},\dots,f_{i_k}}$, we may assume that it is $f_{i_l}$. Otherwise, we take $l=1$. Now $\chi(\lcm(\bdf_{\bdalpha},\bdf_\bdbeta))$ is a multiple of $\chi(f_{j_m}{\bdf_{\bdalpha\setminus l}})$ where $\bdf_{\bdalpha\setminus l}:=f_{i_1}\cdots \widehat{f_{i_l}}\cdots f_{i_k}=\bdf_\bdalpha/f_{i_l}$. We will similarly define $\bdT_{\bdalpha\setminus l}$.

  It follows that for each $z\in D^c$, 
  \begin{equation}
    \deg_z \lcm(\bdf_\bdalpha,\bdf_\bdbeta) \ge \deg_z (f_{j_m} \bdf_{\bdalpha\setminus l}).
    \label{less} \tag{\ddag}
  \end{equation}
  Notice that, if $z\in D^c$ and $z'\in D$ is one of its shadows, then $\deg_{z'}\bdf_\bdalpha=\deg_{z}\bdf_\bdalpha$. Thus the condition \eqref{less} also holds for shadows. Clearly, condition \eqref{less} holds for free variables as well. 
  
  Thus, $\lcm(\bdf_\bdalpha,\bdf_\bdbeta)$ is a multiple of $f_{j_m} \bdf_{\bdalpha\setminus l}$ and we may write $\lcm(\bdf_\bdalpha,\bdf_\bdbeta)=A f_{j_m} \bdf_{\bdalpha\setminus l}$ for some $A\in S$.
Now 
\begin{align*}
  \bdT_{\bdalpha,\bdbeta} & = \dfrac{\lcm(\bdf_{\bdalpha},\bdf_\bdbeta)}{\bdf_\bdalpha}\bdT_\bdalpha - \dfrac{\lcm(\bdf_{\bdalpha},\bdf_\bdbeta)}{\bdf_\bdbeta}\bdT_\bdbeta \\
  & = \left( \dfrac{\lcm(\bdf_\bdalpha,\bdf_\bdbeta)}{\bdf_\bdalpha}T_{i_l}-AT_{j_m}\right) \bdT_{\bdalpha\setminus l} + 
  \left(A\bdT_{\bdalpha\setminus l} - \dfrac{\lcm(\bdf_\bdalpha,\bdf_\bdbeta)}{\bdf_\bdbeta}\bdT_{\bdbeta\setminus m}\right)T_{j_m} \\
  & = \mu\left(\dfrac{\lcm(f_{i_l},f_{j_m})}{f_{i_l}}T_{i_l}-\dfrac{\lcm(f_{i_l},f_{j_m})}{f_{j_m}}T_{j_m}\right)\bdT_{\bdalpha\setminus l} \\
  & \qquad + \lambda \left( \dfrac{ \lcm(\bdf_{\bdalpha\setminus l},\bdf_{\bdbeta\setminus m})}{\bdf_{\bdalpha\setminus l}}\bdT_{\bdalpha\setminus l} - \dfrac{\lcm(\bdf_{\bdalpha\setminus l},\bdf_{\bdbeta\setminus m})}{\bdf_{\bdbeta\setminus m}}\bdT_{\bdbeta\setminus m}\right)T_{j_m} \\
  & = \mu \bdT_{i_l,j_m}\bdT_{\bdalpha\setminus l}+\lambda \bdT_{\bdalpha\setminus l, \bdbeta\setminus m} T_{j_m} \quad \in \quad S[\bdT]_{k-1}J_1+S[\bdT]_{1} J_{k-1} 
\end{align*}
for suitable $\mu,\lambda\in S$. To justify the existence of $\mu$, we observe that
\[
\frac{\lcm(\bdf_\bdalpha,\bdf_\bdbeta)/\bdf_\bdalpha}{\lcm(f_{i_l},f_{j_m})/f_{i_l}} = \frac{A}{\lcm(f_{i_l},f_{j_m})/f_{j_m}}.
\]
Thus, $ (\lcm(\bdf_\bdalpha,\bdf_\bdbeta)/\bdf_\bdalpha)T_{i_l}-AT_{j_m}\in J$. However, $(\lcm(f_{i_l},f_{j_m})/f_{i_l})T_{i_l}-(\lcm(f_{i_l},f_{j_m})/f_{j_m})T_{j_m}$ corresponds to the minimal relation between $f_{i_l}t$ and $f_{j_m}t$. This guarantees the existence of $\mu$. The situation for $\lambda$ is similar.
\end{proof}

\begin{exam}
  Villarreal \cite[Example 3.1]{MR1335312} considered the ideal $I$ generated by $f_1=x_1x_2x_3$, $f_2=x_2x_4x_5$, $f_3=x_5x_6x_7$ and $f_4=x_3x_6x_7$ in $S=\KK[x_1,\dots,x_7]$. The defining ideal is generated by
  \[
  x_3T_3-x_5T_4,x_6x_7T_1-x_1x_2T_4,x_6x_7T_2-x_2x_4T_3,x_4x_5T_1-x_1x_3T_2,x_4T_1T_3-x_1T_2T_4.
  \]
  The line graph $L(I)$ of $I$ is a square. We can take $D=\Set{x_1,x_4,x_6}$ and $D^c=\Set{x_2,x_3,x_5,x_7}$ and define the corresponding map $\chi$. The variable $x_6$ is a shadow of $x_7\in D^c$. As for the generator $x_4T_1T_3-x_1T_2T_4\in P_2$, we notice that $\chi(f_1f_3)=\chi(f_2f_4)=x_2x_3x_5x_7$.
\end{exam}

\begin{rem}
  \label{tree}
  If $f\in G(I)$ corresponds to a leaf in the line graph $L(I)$ of $I$, then $f$ is an $M$-element of $I$. Thus, when $L(I)$ is a forest, the ideal $I$ is of linear type by Corollary \ref{res0}.
\end{rem}

\begin{proof}
  [Proof of Proposition \ref{Fouli-Lin}]
  By Remarks \ref{connected-component} and \ref{tree}, we may assume that the line graph $L(I)$ is a cycle. When this cycle is a triangle, we may additionally assume that the GCD of the three monomial generators of $I$ is trivial.
  Now, take a $\bdT_{\bdalpha,\bdbeta}\in P_k$ for $k\ge 2$ in Proposition \ref{Vil3.1}. Say $\bdalpha=(i_1,\dots,i_k)$ and $\bdbeta(j_1,\dots,j_k)$. We may further assume that $\Set{i_1,\dots,i_k}\cap \Set{j_1,\dots,j_k}=\emptyset$.

  We first consider the case that $I'=\braket{f_{i_1},\dots,f_{i_k},f_{j_1},\dots,f_{j_k}}=I$ and the line graph $L(I')$ is the whole cycle. We may write that $\Set{f_{i_1},\dots,f_{i_k}}=\Set{g_1,\dots,g_l}$, where the $g_p\ne g_q$ for $p\ne q$.
  Suppose for contradiction that $\gcd(g_p,g_q)\ne 1$ for some $p\ne q$. Then $x| \gcd(g_p,g_q)$ for some $x\in D^c$. Since $x$ divides up to two monomial generators of $I$, $x$ divides none of the $f_{j_r}$, $r=1,\dots,k$. This contradicts the assumption that $\chi(\bdf_\bdalpha)=\chi(\bdf_{\bdbeta})$.  
  Hence $\gcd(g_p,g_q)=1$ for $p\ne q$. 
  
  Since the line graph $L(I)$ is the whole cycle, we must have $\chi(g_1\cdots g_l)=\prod_{x\in D^c} x$. But $\deg(\chi(g_1\cdots g_l))=2l$ is an even number, while $\# D^c$ is the length of the cycle which is an odd number. This is impossible.

  Thus $I'\ne I$ and the line graph $L(I')$ of $I'$ must be a forest. Now $I'$ is of linear type and $\bdT_{\bdalpha,\bdbeta}$ is a linear combination of the linear forms. This shows that $I$ is of linear type.
\end{proof}

The following result is a partial generalization of Villarreal's \cite[Proposition 3.1]{MR1335312}.

\begin{prop}
  \label{even_generators}
  Let $I$ be a squarefree monomial ideal, such that the line graph $L(I)$ of $I$ is an even cycle of length $s=2k\ge 4$. Suppose that the generators $f_1,\dots,f_{2k}$ are ordered such that they induce a monomial walk in $L(I)$. Then the defining ideal $J$ is generated by $J_1$ and $\bdT_w:=\bdT_{\bdw_1,\bdw_2}$ where $\bdw_1=(1,3,\dots,2k-1)$ and $\bdw_2=(2,4,\dots,2k)\in \calI_k$. 
\end{prop}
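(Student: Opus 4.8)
The plan is to feed everything through Proposition~\ref{Vil3.1}, which already gives $J = S[\bdT]J_1 + S[\bdT]\cdot(\cup_{m\ge 2}P_m)$, and then to show that every binomial in $\cup_{m\ge 2}P_m$ already lies in $S[\bdT]J_1 + S[\bdT]\bdT_w$. Since $L(I)=C_{2k}$ with $k\ge 2$, the line graph is not complete, so $I$ is not a cone and no variable can lie in three generators; hence Construction~\ref{push-down} applies. Writing $z_i$ for the variable kept at the intersection $f_i\cap f_{i+1}$, the two kept variables of each $f_i$ are distinct (a common one would meet three facets), so $\chi(f_i)=z_{i-1}z_i$ (indices mod $2k$) is genuinely a degree-two edge, and $\braket{\chi(f_1),\dots,\chi(f_{2k})}$ is exactly the edge ideal of the cycle on the vertices $z_1,\dots,z_{2k}$. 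A direct check gives $\chi(\bdf_{\bdw_1})=\chi(\bdf_{\bdw_2})=z_1\cdots z_{2k}$, confirming $\bdT_w\in P_k$.

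Next I would perform a standard disjoint-support reduction on a binomial $\bdT_{\bdalpha,\bdbeta}\in P_m$. If an index $t$ occurs in both $\bdalpha$ and $\bdbeta$, then, using $\gcd(f_t a,f_t b)=f_t\gcd(a,b)$ for monomials, one gets $\bdT_{\bdalpha,\bdbeta}=T_t\,\bdT_{\bdalpha',\bdbeta'}$, where $\bdalpha',\bdbeta'$ delete one copy of $t$ from each; moreover $\chi(\bdf_{\bdalpha'})=\chi(\bdf_{\bdbeta'})$ still holds, so $\bdT_{\bdalpha',\bdbeta'}\in P_{m-1}$. By induction on $m$ I may therefore assume $\bdalpha$ and $\bdbeta$ have disjoint supports.

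The combinatorial heart is to classify all disjoint-support pairs with $\chi(\bdf_\bdalpha)=\chi(\bdf_\bdbeta)$. Letting $\alpha_i,\beta_i$ be the multiplicities of index $i$ and reading off the exponent of $z_v$ in $\chi(\bdf_\bdalpha)=\prod_i(z_{i-1}z_i)^{\alpha_i}$, the equality becomes the system $\delta_v+\delta_{v+1}=0$ for all $v$, where $\delta_i=\alpha_i-\beta_i$. Around the cycle this forces $\delta_i=(-1)^i c$ for a single constant $c\in\ZZ$ (consistent precisely because the length $2k$ is even); the alternating "odd-minus-even" vector is the only solution. Disjointness of supports then pins down $\Set{\bdalpha,\bdbeta}=\Set{c\bdw_1,c\bdw_2}$ for some $c\ge 1$. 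Finally a telescoping step closes the argument: with $p=\bdf_{\bdw_2}/\gcd(\bdf_{\bdw_1},\bdf_{\bdw_2})$ and $q=\bdf_{\bdw_1}/\gcd(\bdf_{\bdw_1},\bdf_{\bdw_2})$, so that $\bdT_w=p\bdT_{\bdw_1}-q\bdT_{\bdw_2}$ and, using $\gcd(a^c,b^c)=\gcd(a,b)^c$, $\bdT_{c\bdw_1,c\bdw_2}=p^c\bdT_{\bdw_1}^c-q^c\bdT_{\bdw_2}^c$, the identity
\[
A^c-B^c=(A-B)\sum_{i=0}^{c-1}A^iB^{c-1-i}, \qquad A=p\bdT_{\bdw_1},\ B=q\bdT_{\bdw_2},
\]
exhibits $\bdT_{c\bdw_1,c\bdw_2}$ as an explicit $S[\bdT]$-multiple of $\bdT_w$. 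Reinstating the $T_t$-factors from the disjointness reduction gives $\cup_{m\ge2}P_m\subset S[\bdT]J_1+S[\bdT]\bdT_w$, hence $J=S[\bdT]J_1+S[\bdT]\bdT_w$.

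I expect the main obstacle to be the combinatorial step: correctly translating the relation $\chi(\bdf_\bdalpha)=\chi(\bdf_\bdbeta)$ into the cycle-boundary system and solving it, in particular verifying that the kept variables $z_i$ are genuinely distinct so that the $\chi$-image really is the edge ideal of $C_{2k}$ and the only solution is the alternating vector. By comparison, the disjoint-support reduction and the telescoping are routine monomial algebra.
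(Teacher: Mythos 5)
Your proposal is correct, and its skeleton is the same as the paper's: both run everything through Proposition \ref{Vil3.1}, reduce a relation $\bdT_{\bdalpha,\bdbeta}\in P_m$ to the case of disjoint supports, show that the only nontrivial such relations come from the pair $(c\bdw_1,c\bdw_2)$, and conclude that these are multiples of $\bdT_w$. The difference lies in how the combinatorial core is executed. The paper argues as in its reproof of Proposition \ref{Fouli-Lin}: the distinct generators appearing in $\bdalpha$ are pairwise coprime, so their $\chi$-images form a perfect matching of the $2k$-cycle, forcing the support to be one of the two alternating classes $\Set{f_1,f_3,\dots,f_{2k-1}}$ or $\Set{f_2,f_4,\dots,f_{2k}}$; it then propagates a single multiplicity $\gamma$ around the cycle to get $\bdf_\bdalpha=(f_1f_3\cdots f_{2k-1})^\gamma$ and $\bdf_\bdbeta=(f_2f_4\cdots f_{2k})^\gamma$, and declares the divisibility of $\bdT_{\bdalpha,\bdbeta}$ by $\bdT_w$ to be clear. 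Your classification via the system $\delta_v+\delta_{v+1}=0$ around the cycle reaches the same endpoint in one stroke and is in fact slightly tighter: it shows that, after the disjoint-support reduction, the case where the involved generators fail to cover the whole cycle (which the paper's phrase ``we may as well assume'' quietly absorbs into the forest case) is vacuous, since any nonzero solution of the system must have full alternating support. You also supply explicitly the two steps the paper leaves implicit: the identity $\bdT_{\bdalpha,\bdbeta}=T_t\,\bdT_{\bdalpha',\bdbeta'}$ justifying the reduction to disjoint supports, and the telescoping factorization $A^c-B^c=(A-B)\sum_{i=0}^{c-1}A^iB^{c-1-i}$ behind the paper's closing claim. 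Net effect: same theorem, same route, with your write-up making uniform and explicit what the paper's terser version handles by cases and assertion.
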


\begin{proof}
  As in our (re)proof for Proposition \ref{Fouli-Lin}, we may consider the relation $\bdT_{\bdalpha,\bdbeta}\in P_r$ for $\bdalpha=(i_1,\dots,i_r)$ and $\bdbeta=(j_1,\dots,j_r)\in \calI_s$ with $\Set{i_1,\dots,i_r}\cap \Set{j_1,\dots,j_r}=\emptyset$. We may as well assume that $\Set{f_{i_1},\dots,f_{i_r}}=\Set{g_1,\dots,g_l}$ with $\gcd(g_p,g_q)=1$ for $p\ne q$ and $\chi(g_1\cdots g_l)=\prod_{x\in D^c}x$. Thus, $l=k$ and $\Set{g_1,\dots,g_k}$ equals either $\Set{f_1,f_3,\dots,f_{2k-1}}$ or $\Set{f_2,f_4,\dots,f_{2k}}$. Without loss of generality, we assume that it is $\Set{f_1,f_3,\dots,f_{2k-1}}$.

  Notice that $\gcd(f_1,f_2)\ne 1$. Take $x:=\chi(\gcd(f_1,f_2))\in D^c$. Since $x$ divides only $f_1$ and $f_2$, 
  \[
  \#\Set{p|i_p=1\text{ for } 1\le p \le r}=\deg_x \chi(\bdT_\bdalpha)
    =\deg_x\chi(\bdT_\bdbeta)=\#\Set{q|j_q=2\text{ for }1\le q \le r}.
    \]
Denote the above multiplicity by $\gamma$. Then one can easily show that $\#\Set{p|i_p=o \text{ for } 1\le p \le r}=\gamma$ for all odd indices $o$ in $[2k]$. Thus, $r=k\gamma$ and $\bdf_\bdalpha=(f_1f_3\cdots f_{2k-1})^\gamma$. Similarly, we know $\bdf_\bdbeta=(f_2f_4\cdots f_{2k})^\gamma$. It is clear that $\bdT_{\bdalpha,\bdbeta}$ is divisible by $\bdT_w$.
\end{proof}

\begin{df}
  \label{patch}
  Let $\Delta$ be a simplicial complex and $F_1$ and $F_2$ be two adjacent facets in $\Delta$. A \Index{patch that covers $F_1$ and $F_2$} is a new facet $G$ to $\Delta$ ($G$ is not comparable with any of the existing facets of $\Delta$), such that
  \[
  G \subset \bigg(F_1\cup F_2\bigg) \setminus \bigcup_{F\in \calF(\Delta)\setminus \Set{F_1,F_2}} F.
  \]
\end{df}

In Example \ref{exam3}, face $G$ of $\Delta$ corresponds to a patch that covers $F_2$ and $F_3$ in $\Delta'$. But $\widetilde{G}$ of $\widetilde{\Delta}$ does not.

\begin{thm}
  \label{even1patch}
  Suppose the simplicial complex $\Delta'$ is a linear cycle of even length $2k\ge 4$. Let $G$ be a patch that covers $F_1$ and $F_2$ of $\calF(\Delta')$. Then the simplicial complex $\Delta:=\braket{\Delta',G}$ is of linear type.
\end{thm}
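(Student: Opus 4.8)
The plan is to reduce the statement to the single obstruction produced by the even cycle and then to kill that obstruction with the two linear relations supplied by the patch. First I would normalize the set-up. Because $\Delta'$ is a linear cycle, its line graph is the genuine $2k$-cycle and $\bigcap_i F_i=\emptyset$, so no cone stripping (Remark~\ref{cone}) is needed; write $f_i=\bdx_{F_i}$ and $f_G=\bdx_G$. By Definition~\ref{patch} we have $G\subseteq F_1\cup F_2$ and $G\cap F_i=\emptyset$ for all $i\ge 3$; hence $\gcd(f_G,f_i)=1$ for $i\ge 3$ and $f_G\mid\lcm(f_1,f_2)$. Thus $L(\Delta)$ is the $2k$-cycle with one extra vertex $G$ joined to $F_1$ and $F_2$, and every cycle through $G$---the triangle $\Set{F_1,F_2,G}$ and the $(2k+1)$-cycle $F_1,G,F_2,F_3,\dots,F_{2k}$---has odd length.

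Second I would peel. Adjoining $f_{2k},f_{2k-1},\dots,f_4$ in this order to the triangle $\braket{f_1,f_2,f_G}$, each facet is a leaf of the line graph at the moment it is added, hence an $M$-element (Remark~\ref{tree}). The triangle itself is of linear type (a unique odd cycle, Proposition~\ref{Fouli-Lin}), so Corollary~\ref{res0} shows that $I^\flat:=\braket{f_1,f_2,f_G,f_4,\dots,f_{2k}}$ is of linear type; its line graph is a triangle with a path attached at $F_1$. All the content of the theorem is therefore concentrated in the final step, the adjunction of the cycle-closing facet $f_3$, which is adjacent to both $f_2$ and $f_4$ and is \emph{not} an $M$-element---precisely why Corollary~\ref{res0} cannot conclude and a direct computation is required.

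Third, the heart of the argument. Let $J$ be the defining ideal of $I=I(\Delta)$. By Proposition~\ref{even_generators} the relations among $f_1,\dots,f_{2k}$ alone are generated by $J_1$ together with the single binomial $\bdT_w=\bdT_{\bdw_1,\bdw_2}$, where $\bdw_1=(1,3,\dots,2k-1)$ and $\bdw_2=(2,4,\dots,2k)$. Set $g=\gcd(\bdf_{\bdw_1},\bdf_{\bdw_2})$. Because $G$ meets no $F_i$ with $i\ge 3$, a short check shows that the monomials $m_{G,1}$ and $m_{G,2}$ (the coefficients of $T_1$ and $T_2$ in $l_{1,G}$ and $l_{2,G}$) divide $\bdf_{\bdw_2}/g$ and $\bdf_{\bdw_1}/g$ respectively, so one may replace $T_1$ and $T_2$ by $T_G$ modulo $J_1$ inside $\bdT_w$. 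After this replacement $\bdT_w$ acquires a common factor $T_G$ and collapses, modulo $\braket{J_1}$, to $T_G$ times the analogous binomial $\bdT_{\bdw_1',\bdw_2'}$ for $\braket{f_3,\dots,f_{2k}}$, with $\bdw_1'=(3,5,\dots,2k-1)$ and $\bdw_2'=(4,\dots,2k)$ (for $2k=4$ this factor is $\bdx_{F_1\cap F_2}\,\bdT_{(3),(4)}$). But the line graph of $\braket{f_3,\dots,f_{2k}}$ is the path $F_3-F_4-\cdots-F_{2k}$, a forest, so this ideal is of linear type and $\bdT_{\bdw_1',\bdw_2'}\in\braket{J_1}$; hence $\bdT_w\in\braket{J_1}$. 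The remaining generators of $J$---those involving $T_G$, i.e.\ those threading the triangle or the $(2k+1)$-cycle---carry no obstruction, since each $T_G$ can be eliminated through $l_{1,G}$ or $l_{2,G}$ (using $f_G\mid f_1f_2$ and $\gcd(f_G,f_i)=1$ for $i\ge 3$), reducing them to the $T_G$-free case by the parity argument in the (re)proof of Proposition~\ref{Fouli-Lin}. Thus $J=\braket{J_1}$ and $I$ is of linear type.

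The main obstacle is twofold. The laborious part is the gcd bookkeeping that identifies the collapsed binomial precisely as a monomial multiple of $\bdT_{\bdw_1',\bdw_2'}$ for \emph{every} even length, which amounts to comparing, variable by variable, how $f_1$ and $f_2$ meet the remaining arc only through $F_1\cap F_{2k}$, $F_1\cap F_2$, $F_2\cap F_3$ and $G$; the case where $G$ also grabs a variable of $F_1\cap F_2$ (so that this variable lies in three generators) must be treated separately, since it breaks the ``each variable in at most two generators'' property used in Construction~\ref{push-down}. The more conceptual difficulty is justifying that, once the non-$M$-element $f_3$ is adjoined, $\bdT_w$ really is the only obstruction beyond $J_1$: as Proposition~\ref{CDN} is unavailable for a non-$M$-element, this must be done by rerunning the $S$-polynomial reduction of that proof by hand and invoking the odd-cycle argument to dispose of every relation passing through $G$.
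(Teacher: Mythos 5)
Your treatment of the $T_G$-free relations is correct, and it is in fact tidier than the paper's own: the divisibilities $m_{G,1}\mid \bdf_{\bdw_2}/g$ and $m_{G,2}\mid \bdf_{\bdw_1}/g$ do hold (the variables of $G\setminus F_1$, resp.\ $G\setminus F_2$, occur in no facet of $\Delta'$ other than $F_2$, resp.\ $F_1$), so substituting $T_G$ for \emph{both} $T_1$ and $T_2$ modulo $J_1$ turns $\bdT_w$ into $T_G$ times a relation on the path $f_3,\dots,f_{2k}$, which is of linear type; hence $\bdT_w\in\braket{J_1}$. The paper's case (a) instead performs only the single swap $T_2\to T_{2k+1}$ and explicitly defers $\bdT_w$ to its case (b), so your version closes that case outright. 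Your peeling step (linear type of $I^\flat$) is also correct, but it is never wired into the argument; at best it disposes of relations whose support misses some generator.

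The genuine gap is your final sentence. The relations $\bdT_{\bdalpha,\bdbeta}$ with $2k+1\in\calI_\bdalpha\cup\calI_\bdbeta$ are not ``no obstruction''; they are the bulk of the paper's proof (its case (b), subcases (i)--(iv)), and the elimination you propose fails in general. Replacing $T_G$ by $T_1$ modulo $l_{1,G}$ inside a term $c\,T_G\,\bdT_{\bdalpha\setminus (2k+1)}$ requires $m_{1,G}=\bdx_{F_1\setminus G}$ to divide $c=\lcm(\bdf_\bdalpha,\bdf_\bdbeta)/\bdf_\bdalpha$; if $1\in\calI_\bdalpha$, then any variable of $F_1$ lying in no other facet and not in $G$ has degree $0$ in $c$, so this divisibility fails, and the same objection applies to $l_{2,G}$. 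This is precisely why the paper runs a case analysis on how $\Set{1,2,2k+1}$ distributes between $\calI_\bdalpha$ and $\calI_\bdbeta$: when $1,2k+1$ sit opposite $2$ one can swap $T_2$ for $T_{2k+1}$ via $\bdT_{2,2k+1}$ and induct on the degree $l$, but when $\Set{1,2,2k+1}\subset\calI_\bdalpha$ (the paper's subcase (iv)) no swap through $l_{1,G}$ or $l_{2,G}$ is available at all, and the paper must fall back on the Proposition \ref{Vil3.1}-style factorization $\lcm(\bdf_\bdalpha,\bdf_\bdbeta)=Af_1\bdf_{\bdbeta\setminus p}$. Your fallback --- ``the parity argument in the (re)proof of Proposition \ref{Fouli-Lin}'' --- is also unavailable here: that argument rests on Proposition \ref{Vil3.1} and Construction \ref{push-down}, both of which require the line graph to be a cycle graph, whereas $L(\Delta)$ is a $2k$-cycle with an extra vertex attached to $F_1$ and $F_2$. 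Your closing paragraph in effect concedes that this part would have to be rerun by hand; until it is, the proposal proves the theorem only for the relations not involving $T_G$.
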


\begin{proof}
  We may assume that the sequence $F_1,F_2,\dots,F_{2k},F_1$ corresponds to a closed walk in $L(\Delta')$. We will also write $F_{2k+1}=G$. Suppose each $F_i$ corresponds to the squarefree monomial $f_i$ in $S$ and $G$ corresponds to the squarefree monomial $f_{2k+1}=g$. Now $I'=\braket{f_1,\dots,f_{2k}}$ and $I=\braket{I,g}$ are the facet ideals of $\Delta'$ and $\Delta$ respectively.

  To show $I$ is of linear type, it suffices to check a relation $\bdT_{\bdalpha,\bdbeta}$ with $\bdalpha=(i_1,\dots,i_l)$ and $\bdbeta=(j_1,\dots,j_l)\in \calI_l$ for $l\ge 2$. We may assume that $\calI_\bdalpha:=\Set{i_1,\dots,i_l}\cap \calI_\bdbeta:=\Set{j_1,\dots,j_l}=\emptyset$ and deal with the following cases.
  \begin{enumerate}[a]
    \item \label{case_a} $ 2k+1\notin (\calI_\bdalpha \cup \calI_\bdbeta)$. In this case, $\bdT_{\bdalpha,\bdbeta}$ deals with relations for generators of $I'$. By Proposition \ref{even_generators}, we may assume that $l=k$ and \[
      \bdT_{\bdalpha,\bdbeta}=\bdT_w=\dfrac{\lcm(\bdf_{\bdw_1},\bdf_{\bdw_2})}{\bdf_{\bdw_1}}\bdT_{\bdw_1}-\dfrac{\lcm(\bdf_{\bdw_1},\bdf_{\bdw_2})}{\bdf_{\bdw_2}}\bdT_{\bdw_2}.
      \]
      Notice that in 
      \[
      \bdT_{2,2k+1}=\dfrac{\lcm(f_2,f_{2k+1})}{f_2}T_2-\dfrac{\lcm(f_2,f_{2k+1})}{f_{2k+1}}T_{2k+1},
      \]
      $\lcm(f_2,f_{2k+1})/{f_2}$ is a product of some free variables of $f_1$. Thus 
      \[
      \dfrac{\lcm(f_2,f_{2k+1})}{f_2} T_2 \, \left| \, \dfrac{\lcm(\bdf_{\bdw_1},\bdf_{\bdw_2})}{\bdf_{\bdw_2}}\bdT_{\bdw_2}\right. ,
      \]
      and we are reduced to consider the next case.
    \item $ 2k+1\in (\calI_\bdalpha \cup \calI_\bdbeta)$. By symmetry, we have the following several subcases.
      \begin{enumerate}[i]
        \item $\Set{1,2}\not\subset \calI_\bdalpha \cup \calI_\bdbeta$.  In this case, the underlying line graph of $\bdT_{\bdalpha,\bdbeta}$ is a forest. Hence $\bdT_{\bdalpha,\bdbeta}$ is a linear combination of linear forms.
        \item $\Set{1,2,2k+1}\cap \calI_\bdalpha=\Set{1,2k+1}$ and $\Set{1,2,2k+1}\cap \calI_\bdbeta=\Set{2}$. As in case \ref{case_a}, we can take use of $\bdT_{2,2k+1}$. Since 
          \[
          \lcm(\bdf_\bdalpha,\bdf_\bdbeta)/\bdf_\bdbeta=A\lcm(f_2,f_{2k+1})/f_2
          \]
          for some $A\in S$, we have
          \[
          \bdT_{\bdalpha,\bdbeta}= - A\bdT_{\bdbeta\setminus 2} \bdT_{2,2k+1}+
          \left( \dfrac{\lcm(\bdf_\bdalpha,\bdf_\bdbeta)}{\bdf_\bdalpha}\bdT_{\bdalpha\setminus 2k+1}- \dfrac{\lcm(f_2,f_{2k+1})}{f_{2k+1}} A \bdT_{\bdbeta\setminus 2}  \right)  T_{2k+1}.
          \]
          The first summand is divisible by the linear form $\bdT_{2,2k+1}$ while the second summand is divisible by $T_{2k+1}$. Thus we can decrease the degree $l$ and prove by induction.
        \item $\Set{1,2,2k+1}\cap \calI_\bdalpha=\Set{1,2}$ and $\Set{1,2,2k+1}\cap \calI_\bdbeta=\Set{2k+1}$. Notice that 
          \[
          {\lcm(\bdf_{\bdalpha},\bdf_\bdbeta)}/{\bdf_\bdbeta}=A {\lcm(f_2,f_{2k+1})}/{f_2} 
          \]
          for some $A\in S$. Thus  
          \[
          \bdT_{\bdalpha,\bdbeta}= - A\bdT_{\bdbeta\setminus 2k+1} \bdT_{2,2k+1}+
          \left( \dfrac{\lcm(\bdf_\bdalpha,\bdf_\bdbeta)}{\bdf_\bdalpha}\bdT_{\bdalpha\setminus 2}- \dfrac{\lcm(f_2,f_{2k+1})}{f_{2}} A \bdT_{\bdbeta\setminus 2k+1}  \right)  T_{2}.
          \]
          The first summand is divisible by the linear form $\bdT_{2,2k+1}$ while the second summand is divisible by $T_{2}$. Thus we can decrease the degree $l$ and prove by induction.
        \item $\Set{1,2,2k+1}\subset \calI_\bdalpha$. We can argue as in the proof for Proposition \ref{Vil3.1} by noticing that $\lcm(\bdf_\bdalpha,\bdf_\bdbeta)=A f_1 \bdf_{\bdbeta\setminus p}$ for some $p\in \calI_\bdbeta$. \qedhere
      \end{enumerate}
  \end{enumerate}
\end{proof}

\begin{df}
  \label{compatible-patches}
  Let $\calP=\Set{p_{i}^{j_i,k_i}}_{i=1}^q$ be a finite set of patches of a simplicial complex $\Delta$ such that $p_i^{j_i,k_i}$ covers $F_{j_i}$ and $F_{k_i}$. $\calP$ is called a \Index{system of compatible patches} if $\Set{j_{i_1},k_{i_1}}\ne \Set{j_{i_2},k_{i_2}}$  and $p_{i_1}^{j_{i_1},k_{i_1}}\cap p_{i_2}^{j_{i_2},k_{i_2}}=\emptyset$ for $i_1\ne i_2$.
\end{df}

\begin{conj}
  Let $\Delta$ be a simplicial complex whose line graph is a cycle of length $l$ and $\calP=\Set{p_i^{j_i,k_i}}_{i=1}^q$ is a system of compatible patches. If $l+q$ is odd, then $\braket{\Delta,\calP}$ is of linear type.
\end{conj}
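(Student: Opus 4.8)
The plan is to generalize the machinery behind Proposition~\ref{Fouli-Lin} and Theorem~\ref{even1patch}, which are exactly the cases $q=0$ (forcing $l$ odd) and $q=1$ (forcing $l$ even) of the conjecture. First I would run the standard reductions: by Remark~\ref{connected-component} it suffices to treat the connected complex $\braket{\Delta,\calP}$, and by Remark~\ref{cone} I may assume $\Delta$ is a genuine linear cycle, so its line graph is precisely the cycle on facets $F_1,\dots,F_l$ with $F_i\cap F_{i+1}\ne\emptyset$. I would then fix the deletion set $D$ and the homomorphism $\chi$ of Construction~\ref{push-down} attached to the cycle, used purely as the bookkeeping device of Proposition~\ref{Vil3.1} (and Proposition~\ref{unwrap_cone}) rather than performing a substitution that could destroy minimality of the patch generators. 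The key structural fact to record is that a patch $g_i$ covering the adjacent pair $F_{j_i},F_{k_i}$ satisfies $g_i\subset F_{j_i}\cup F_{k_i}$ and is disjoint from every other facet, so that $\lcm(f_{k_i},g_i)/f_{k_i}$ is a product of free variables of $f_{j_i}$ --- exactly the divisibility that drove the absorption step in Theorem~\ref{even1patch}.

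Next I would set up the reduction as in Proposition~\ref{Vil3.1}: since $J$ is generated by the binomials $\bdT_{\bdalpha,\bdbeta}$ with $\bdalpha,\bdbeta\in\calI_k$ and the case $\chi(\bdf_\bdalpha)=\chi(\bdf_\bdbeta)$ is the only nontrivial one, it suffices to reduce each $\chi$-balanced relation in $P_k$ with $\calI_\bdalpha\cap\calI_\bdbeta=\emptyset$ into $S[\bdT]_1J_{k-1}+S[\bdT]_{k-1}J_1$. The useful combinatorial model is the dual cycle $C_l^{\ast}$ whose vertices are the $l$ overlap representatives $z_1,\dots,z_l$ of $D^c$ and whose edges are the cycle facets, so that $\chi(f_i)=z_{i-1}z_i$ is the edge of $f_i$, while each patch contributes either a single vertex $z_a$ or a unit. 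Under this translation a $\chi$-balanced binomial with disjoint supports becomes a pair of edge-and-vertex multisets on $C_l^{\ast}$ having the same degree sequence, so their symmetric difference is an Eulerian configuration; since $C_l^{\ast}$ is a single cycle, the facet part of such a difference is either empty or all of $C_l^{\ast}$, with the patch vertices providing the only degree-one corrections.

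The proof then splits into the two situations of Theorem~\ref{even1patch}. When no patch index occurs, the relation lives among the cycle generators: if it does not wrap $C_l^{\ast}$ the supporting subgraph is a forest with pendant triangles and $\bdT_{\bdalpha,\bdbeta}$ reduces to linear forms by Remark~\ref{tree} and Corollary~\ref{res0}; if it wraps, the Fouli--Lin count (twice the number of coprime facets used must equal $l$) forces $l$ odd, exactly as in Proposition~\ref{Fouli-Lin}, while for $l$ even it produces the single irreducible obstruction $\bdT_w$ of Proposition~\ref{even_generators}. When a patch index occurs I would strip it off using the linear patch relation $\bdT_{k_i,s_i}$, whose initial term is a product of free variables of $f_{j_i}$ times a single $T$; by the divisibility recorded above this initial term divides the wrapped obstruction, lowering $k$ and permitting induction on degree precisely as in Theorem~\ref{even1patch}. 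The parity hypothesis enters because a wrapped $\chi$-balanced configuration on $C_l^{\ast}$ can be assembled from degree-two edges and degree-one patch vertices on both sides only in a manner whose invariant is $l+q\bmod 2$: when $l+q$ is odd the two sides cannot simultaneously be built from pairwise-coprime cycle facets alone, so some patch must participate and its linear relation performs the reduction, whereas the even case leaves a surviving $\bdT_w$-type generator certifying sharpness.

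The main obstacle I anticipate is the bookkeeping once several compatible patches are present. I must show, using the disjointness in Definition~\ref{compatible-patches}, that for every wrapped binomial at least one patch relation has an initial monomial dividing it, and that the successive reductions terminate without regenerating the same obstruction; equivalently, that $l+q\bmod 2$ is a genuine monovariant for this reduction rather than merely matching the two known endpoints. Making this rigorous amounts to an Eulerian-parity argument on $C_l^{\ast}$ marked by the patch vertices, showing that an $l+q$ odd configuration admits no irreducible even closed ``monomial walk'' through the patched structure while an $l+q$ even one does; carrying out that parity argument in full, and simultaneously verifying that it reproduces both the impossibility identity of Proposition~\ref{Fouli-Lin} and the single absorption of Theorem~\ref{even1patch} as special cases, is where the real work lies.
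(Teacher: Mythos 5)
The statement you are addressing is stated in the paper as a \emph{conjecture}: the paper offers no proof of it, so there is no author argument to compare yours against, and the only question is whether your proposal closes the problem on its own. It does not. What you have written is a plausible research plan --- the reductions via Remark \ref{connected-component}, Remark \ref{cone} and Construction \ref{push-down}, the dual-cycle bookkeeping, and the split into ``wrapped'' versus ``unwrapped'' relations do faithfully extend the pattern of Proposition \ref{Fouli-Lin} (the case $q=0$, $l$ odd) and Theorem \ref{even1patch} (the case $q=1$, $l$ even) --- but every step that goes beyond those two known cases is asserted rather than proved, and you concede as much in your final paragraph (``carrying out that parity argument in full \dots is where the real work lies'').

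Concretely, three gaps. First, Proposition \ref{Vil3.1} is proved only when the line graph \emph{is} a cycle; once patches are added the line graph is a cycle with $q$ pendant triangles, so your claim that it suffices to treat the $\chi$-balanced relations needs its own proof. (In Theorem \ref{even1patch} the paper avoids invoking Proposition \ref{Vil3.1} on the enlarged complex by a direct case analysis on whether the patch index occurs, and that analysis already requires four subcases for a single patch; with $q$ patches the combinatorics of which patches occur on which side of the binomial is entirely unaddressed.) Second, the central parity claim --- that when $l+q$ is odd every wrapped balanced configuration must involve a patch whose linear relation has initial term dividing the relation --- is precisely the content of the conjecture, and you give no argument for it; note that already for $l=3$, $q=2$ the line graph contains several odd cycles, so neither Proposition \ref{Fouli-Lin} nor Theorem \ref{even1patch} nor any combination of them covers that case. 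Third, the induction you propose (strip a patch index, lower the degree) needs a termination argument showing that the reduced relation is again of a treated form; in Theorem \ref{even1patch} this works because after removing the unique patch index one lands in the situation of Proposition \ref{even_generators}, but with several patches the reduced relation can still involve other patches, and you have not shown that your $l+q \bmod 2$ invariant actually controls this process rather than merely agreeing with the two known endpoints. Until these are supplied, the proposal is an outline of a strategy, not a proof.
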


\section{Villarreal class}

According to Zheng \cite{1196.13003}, a facet $F$ of $\Delta$ is called a \Index{good leaf} if this $F$ is a leaf of each subcomplex $\Gamma$ of $\Delta$ to which $F$ belongs. An order $F_1,\dots,F_s$ of facets is called a \Index{good leaf order} if $F_i$ is a good leaf of $\braket{F_1,\dots,F_i}$ for each $i=1,\dots,s$. It is known that a simplicial complex is a forest if and only if it has a good leaf order; see \cite[Corollary 1.11]{MR2968912} or \cite{arxiv-1307.2190}.

\begin{lem}
  \label{equiv}
  The following two conditions are equivalent:
  \begin{enumerate}[a]
    \item The facet $F_1$ is a good leaf of $\Delta$.
    \item The squarefree monomial $\bdx_{F_1}$ is an $M$-element of $I(\Delta)$.
  \end{enumerate}
\end{lem}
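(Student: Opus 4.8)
The plan is to show that each of (a) and (b) is equivalent to a single combinatorial condition on the facet $F_1$, namely
\[
(\ast)\qquad \text{the family } \Set{F_1\cap F_j : F_j\in\calF(\Delta),\ F_j\neq F_1} \text{ is totally ordered by inclusion.}
\]
First I would translate (b) into $(\ast)$. Writing $f_1=\bdx_{F_1}$, the indeterminates dividing $f_1$ are exactly the vertices of $F_1$, and for $j\neq 1$ the relation $x_k\mid f_j$ says precisely that $x_k\in F_1\cap F_j$. Thus the defining requirement of an $M$-element---that for some order $x_1<\cdots<x_r$ of the vertices of $F_1$ one has $x_k\mid f_j\Rightarrow x_k\cdots x_r\mid f_j$---says exactly that every intersection $F_1\cap F_j$ is a suffix $\Set{x_k,\dots,x_r}$ of that order. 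Since suffixes of a linear order are totally ordered by inclusion, (b) gives $(\ast)$. Conversely, given $(\ast)$ the distinct intersections form a chain $\emptyset\subseteq S_1\subsetneq\cdots\subsetneq S_p\subsetneq F_1$, the last containment being strict because $F_1$ is a facet and hence lies in no other facet. I can then order the vertices of $F_1$ as $x_1<\cdots<x_r$ so that each $S_i$ is a suffix (listing the vertices of $F_1\setminus S_p$ first, then those of $S_p\setminus S_{p-1}$, and so on), which is exactly the $M$-element property. I would note that the first vertex of this order lies in no $F_j$, i.e.\ is a free variable, in agreement with the necessary condition recorded after Definition~\ref{M-element}.

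Next I would translate (a) into $(\ast)$, reading a subcomplex of $\Delta$ to which $F_1$ belongs as one generated by a subset of the facets of $\Delta$ that includes $F_1$. For the forward implication, suppose $F_1\cap G$ and $F_1\cap H$ were incomparable for two facets $G,H\neq F_1$; then in the subcomplex $\braket{F_1,G,H}$ the only candidate branches for $F_1$ are $G$ and $H$, and neither dominates both traces, so $F_1$ would not be a leaf there, contradicting that $F_1$ is a good leaf. Hence $(\ast)$ holds. For the converse, let $\Gamma=\braket{F_1,F_{i_2},\dots,F_{i_m}}$ be any such subcomplex; the intersections $F_1\cap F_{i_t}$ form a subchain of $(\ast)$, so a maximal one $F_1\cap F_{i_{t_0}}$ dominates all the others, and $F_{i_{t_0}}$ is then a branch exhibiting $F_1$ as a leaf of $\Gamma$ (the case $m=1$ being trivial). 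Combining the two translations yields (a) $\iff$ $(\ast)$ $\iff$ (b).

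The verifications themselves---the bookkeeping of suffixes in one direction and the maximal-element/branch argument in the other---are routine. The point I expect to require the most care is the precise meaning of \emph{subcomplex} in the definition of a good leaf: the equivalence with $(\ast)$ needs the standard reading in which one takes subcomplexes generated by subsets of the facets of $\Delta$. For arbitrary subcomplexes the chain condition $(\ast)$ would no longer suffice, since shrinking two facets to smaller faces can produce incomparable traces on $F_1$ even when all the full intersections $F_1\cap F_j$ are comparable. With the facet-generated reading (the one consistent with the good leaf order theory cited before Lemma~\ref{equiv}), both implications of (a) $\iff$ $(\ast)$ go through cleanly and the lemma follows.
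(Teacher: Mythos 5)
Your proposal is correct, and it is more than the paper itself provides: the paper's entire ``proof'' of Lemma \ref{equiv} is a citation, namely that the statement ``follows from the proofs of \cite[Proposition 3.11]{1196.13003} and \cite[Proposition 1.12]{MR2968912}.'' What you have done is reconstruct, in a self-contained way, exactly the mechanism underlying those cited proofs: both conditions are equivalent to your condition $(\ast)$ that the traces $F_1\cap F_j$ ($F_j\ne F_1$) form a chain under inclusion. Your two translations are sound --- the $M$-element order makes every trace a suffix of the order and suffixes form a chain, while conversely a chain $S_1\subsetneq\cdots\subsetneq S_p\subsetneq F_1$ (strictness at the top because distinct facets are incomparable) can be realized as a system of suffixes; and on the good-leaf side, the three-facet subcomplex $\braket{F_1,G,H}$ kills incomparable traces, while a maximal trace in any facet-generated subcomplex supplies the required branch. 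You are also right to flag the reading of ``subcomplex'': the good-leaf notion here (as in Zheng and Soleyman Jahan--Zheng, and as required for the good-leaf-order theory quoted before the lemma) takes subcomplexes generated by subsets of the facets, and your counterexample-style remark about shrinking facets shows why the arbitrary-subcomplex reading would break the equivalence. In short: same mathematical content as the sources the paper defers to, but your version buys self-containedness and makes the conventions explicit, at the cost of a page of routine bookkeeping the paper avoids by citation.
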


\begin{proof}
  This result follows from the proofs of \cite[Proposition 3.11]{1196.13003} and \cite[Proposition 1.12]{MR2968912}. 
\end{proof}

We will take simplexes as simplicial cycles of length $1$. 

\begin{df}
  \label{Villarreal-type}
  Let $\calV$ be the class of simplicial complexes minimal with respect to the following properties:
  \begin{itemize}
    \item Disjoint simplicial  cycles of odd lengths are in $\calV$.
    \item $\calV$ is closed under the operation of attaching good leaves.
  \end{itemize}
  We shall call $\calV$ the \Index{Villarreal class}. When a simplicial complex $\Delta$ is in $\calV$, we say $\Delta$ and its facet ideal $I(\Delta)$ are \Index{of Villarreal type}. 
\end{df}

According to Villarreal \cite[Corollary 3.2]{MR1335312}, a connected simple graph $G$ belongs to $\calV$ if and only if its edge ideal $I(G)$ is of linear type. 

\begin{thm}
  Squarefree monomial ideals of Villarreal type are of linear type.
\end{thm}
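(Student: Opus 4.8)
The plan is to induct on the construction of the Villarreal class as given in Definition \ref{Villarreal-type}. Since $\calV$ is the minimal class closed under the two operations, every $\Delta \in \calV$ is built in finitely many steps: one starts from a disjoint union of simplicial cycles of odd length, and then repeatedly attaches good leaves. I would therefore structure the argument as a base case (the starting disjoint union of odd cycles is of linear type) followed by an inductive step (attaching a good leaf preserves the linear-type property).

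For the base case, suppose $\Delta$ is a disjoint union of simplicial cycles of odd length. By Remark \ref{connected-component}, it suffices to treat each connected component separately, so I may assume $\Delta$ is a single simplicial cycle of odd length $s \ge 3$ (recalling that we treat a simplex as a cycle of length $1$, and such a component is trivially of linear type by Remark \ref{tree} since its line graph is a single vertex, hence a forest). For $s \ge 3$ odd, Corollary \ref{odd_cycles} states precisely that the facet ideal $I(\Delta)$ is of linear type. This disposes of the base case immediately.

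For the inductive step, suppose $\Delta = \braket{\Delta', F}$ is obtained from $\Delta' \in \calV$ by attaching a good leaf $F$. By the inductive hypothesis, $I(\Delta')$ is of linear type. Writing $I = I(\Delta)$ and $I' = I(\Delta')$ with $G(I) = \Set{f_1, \dots, f_s}$ where $f_1 = \bdx_F$ corresponds to the attached good leaf and $I' = \braket{f_2, \dots, f_s}$, the key observation is that $f_1$ is an $M$-element of $I$: this is exactly the content of Lemma \ref{equiv}, which identifies good leaves of $\Delta$ with $M$-elements of $I(\Delta)$. Then Corollary \ref{res0} (equivalently, the statement that $\calL$ is closed under adding $M$-elements, Corollary \ref{res1}) tells us that since $I'$ is of linear type, so is $I$. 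This completes the induction.

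The main thing to get right is the bookkeeping of the induction on the class $\calV$: one must check that the two building operations can be interleaved in any order while keeping the inductive hypothesis intact, which follows from the minimality of $\calV$ in Definition \ref{Villarreal-type}. There is no genuine analytic obstacle here, since all the real work has already been done in the cited results: Corollary \ref{odd_cycles} supplies the base case, Lemma \ref{equiv} translates the combinatorial "good leaf" condition into the algebraic "$M$-element" condition, and Corollary \ref{res0} provides the closure under adding $M$-elements. The only subtlety to watch is ensuring that when a good leaf is attached to a complex that is itself already built from odd cycles plus previously attached leaves, the attached facet remains a good leaf of the \emph{whole} complex $\Delta$ (not merely of some intermediate stage), but this is guaranteed by the definition of the attaching operation in $\calV$, so the induction goes through cleanly.
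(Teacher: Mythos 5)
Your proof is correct and uses exactly the same ingredients as the paper's own (very terse) proof: Remark \ref{connected-component} to reduce to connected components, Corollary \ref{odd_cycles} for the odd simplicial cycles, Lemma \ref{equiv} to translate good leaves into $M$-elements, and Corollary \ref{res0}/\ref{res1} for closure under adding $M$-elements. The induction on the construction of $\calV$ that you spell out is precisely the structure the paper leaves implicit, so there is nothing to add or correct.
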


\begin{proof}
  It follows from Corollaries \ref{res1}, \ref{odd_cycles}, Remark \ref{connected-component} as well as Lemma \ref{equiv}.
\end{proof}

\begin{rem}
  The class of monomial ideals of Villarreal type $\calV$ is a proper subclass of the class of squarefree monomial ideals of linear type $\calL$. See, for instance, the Example \ref{exam3} below.
\end{rem}

\begin{rem}
  Simplicial forests are obviously of Villarreal type. Thus, their facet ideals are of linear type. This fact is already known, e.g., after combining \cite[Theorem 1.14]{MR2968912} with \cite[Theorem 2.4.i]{MR1666661}. On the other hand, the facet ideals of quasi-forests are not necessarily of linear type. Recall that a connected simplicial complex $\Delta$ is called a \Index{quasi-tree}, if there exists an order  $F_1,\dots, F_s$ of the facets, such that $F_i$ is a leaf of $\braket{F_1,\dots,F_i}$ for each $i=1,\dots,s$. Such an order is called a \Index{leaf order}. A simplicial complex $\Delta$ with the property that every connected component is a quasi-tree is called a \Index{quasi-forest}. The following simplicial complex $\Gamma$, which was originally given in \cite{MR2968912}, is an example of quasi-tree.
  \[
  \Gamma=\begin{tabular}{c}
    \resizebox{!}{1in}{\begin{picture}(0,0)%
\includegraphics{D6.pstex}%
\end{picture}%
\setlength{\unitlength}{3947sp}%
\begingroup\makeatletter\ifx\SetFigFont\undefined%
\gdef\SetFigFont#1#2#3#4#5{%
  \reset@font\fontsize{#1}{#2pt}%
  \fontfamily{#3}\fontseries{#4}\fontshape{#5}%
  \selectfont}%
\fi\endgroup%
\begin{picture}(3007,2990)(4530,-4586)
\end{picture}%
}
  \end{tabular}
  \]
  Using Macaulay2 \cite{M2}, we know its facets ideal
  \[
  I(\Gamma)=\braket{x_1x_2x_3x_4,x_1x_4x_5,x_1x_2x_8,x_2x_3x_7,x_3x_4x_6}
  \]
  is not of linear type.
\end{rem}

\section{Other cycles and examples}
Recall that an alternating sequence of distinct vertices and facets 
\begin{equation}
  v_1,F_1,\dots,v_s,F_s,v_{s+1}=v_1 \label{special_cycle} \tag{\dag}
\end{equation}
in a simplicial complex $\Delta$ is called a \Index{(hyper)cycle} or a \Index{Berge cycle} if $v_i,v_{i+1}\in F_i$ for all $i$. When $\Delta$ is one-dimensional, we can treat it as a graph. Then a Berge cycle is exactly a cycle or walk in the classic sense. 

According to \cite{MR2434285}, a Berge cycle is called \Index{special} if no facets contains more than two connecting vertices of this sequence. Special cycles of length $2$ correspond to facets whose intersection is at least $1$-dimensional. And by \cite[Theorem 3.2]{MR2434285}, a simplicial complex is a forest if and only if it contains no special cycle of length $\ge 3$.

Notice that classic cycle graphs are always special. Thus, with the help of Lemma \ref{simpl_cycle}, Remark \ref{cone} and Construction \ref{push-down}, we can always carefully pick up the connecting vertices and construct a special cycle from a given simplicial cycle.  On the other hand, the converse is false, i.e., the underlying simplicial complex of a special cycle is not necessarily a simplicial cycle, as illustrated by the ideal $I'$ in Example \ref{exam2}. Nevertheless, we have the following result.

\begin{prop}
  \label{equiv4}
  Let $\Delta$ be a connected simplicial complex which is not a cone. Suppose
  $s=\#\calF(\Delta)\ge 4$. Then the following statements are equivalent. 
  \begin{enumerate}[a]
    \item $\Delta$ is a simplicial cycle.
    \item $\Delta$ is a linear cycle.
    \item $\Delta$ induces a special cycle of length $s$, but cannot induces any smaller special cycles of length $k$ with $s-1\ge k \ge 3$.
    \item $\Delta$ induces a Berge cycle of length $s$, but cannot induces any smaller Berge cycles of length $k$ with $s-1\ge k \ge 3$.
  \end{enumerate}
\end{prop}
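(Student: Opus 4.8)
The plan is to route all four equivalences through statement (b), establishing (a)$\Leftrightarrow$(b), (b)$\Leftrightarrow$(c), and (b)$\Leftrightarrow$(d). The two genuinely useful tools are Lemma~\ref{simpl_cycle} together with Remark~\ref{cone} on one side, and the characterization of forests via special cycles (\cite[Theorem 3.2]{MR2434285}) on the other. First I would dispose of (a)$\Leftrightarrow$(b). Since $\Delta$ is not a cone we have $\bigcap_{k} F_k=\emptyset$, so Remark~\ref{cone} forces a simplicial cycle to be a linear cycle, giving (a)$\Rightarrow$(b). For (b)$\Rightarrow$(a), label the facets $F_1,\dots,F_s$ along the cycle $L(\Delta)$; consecutive facets meet while nonconsecutive ones are disjoint, and since $s\ge 4$ nonadjacent pairs exist, so $\bigcap_k F_k=\emptyset$. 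One then checks the strong-neighbor and intersection conditions of Lemma~\ref{simpl_cycle} directly: a vertex lying in $F_i\cap F_{i+1}$ and in a third facet $F_j$ would put $F_j$ adjacent to $F_i$ in $L(\Delta)=C_s$, which is impossible, so $F_i\sim_\Delta F_{i+1}$ and the nonadjacent intersections equal $\bigcap_k F_k=\emptyset$. Hence $\Delta$ is a simplicial cycle.

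Next I would prove (b)$\Rightarrow$(c) and (b)$\Rightarrow$(d) simultaneously. Ordering the facets cyclically, choose $v_i\in F_{i-1}\cap F_i$; for $s\ge 4$ the sets $F_{i-1}\cap F_i$ are pairwise disjoint (a shared vertex would create a chord in $C_s$), so the $v_i$ are distinct and $v_1,F_1,\dots,v_s,F_s,v_1$ is a Berge cycle of length $s$. It is moreover special, since each $F_i$ meets only $F_{i-1}$ and $F_{i+1}$ and therefore contains exactly the two connecting vertices $v_i,v_{i+1}$. Minimality is immediate: any Berge cycle of length $k$ uses $k$ distinct facets forming a cycle in $L(\Delta)=C_s$, and $C_s$ contains no cycle of length $k$ with $3\le k\le s-1$; thus there is no shorter Berge cycle and \emph{a fortiori} no shorter special cycle, establishing both (c) and (d).

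The conceptual core is (c)$\Rightarrow$(a), and here I would use the reformulation that $\Delta$ is a simplicial cycle if and only if $\Delta$ is \emph{not} a forest while every nonempty proper subcomplex of $\Delta$ is a forest. This follows directly from the definitions: if every proper subcomplex is a forest and $\Delta$ itself had a leaf, then every subcomplex of $\Delta$ would have a leaf, making $\Delta$ a forest; so $\Delta$ has no leaf, while every proper subcomplex, being a forest, has a leaf. Combining this with \cite[Theorem 3.2]{MR2434285} (a complex is a forest iff it has no special cycle of length $\ge 3$), the length-$s$ special cycle shows $\Delta$ is not a forest, and any proper subcomplex $\Gamma$, built from at most $s-1$ facets, cannot contain a special cycle of length $\ge 3$: such a cycle would have length in $[3,s-1]$ and also be a special cycle of $\Delta$, contradicting (c). Hence every proper subcomplex is a forest and $\Delta$ is a simplicial cycle.

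Finally I would handle (d)$\Rightarrow$(b). The length-$s$ Berge cycle exhibits a Hamiltonian cycle on the facets inside $L(\Delta)$, and the goal is to rule out chords. If $F_a\cap F_b\ne\emptyset$ for nonadjacent $a,b$, pick $w\in F_a\cap F_b$; the two Hamiltonian arcs between $F_a$ and $F_b$ close up through $w$ into two Berge cycles whose lengths both lie in $[3,s-1]$, contradicting (d). The main obstacle is the degenerate case in which $w$ coincides with one of the connecting vertices $v_m$: then $v_m$ lies in at least three facets, and instead of the arc construction one shortcuts the original length-$s$ Berge cycle at $v_m$ to produce a strictly shorter Berge cycle of length $\ge 3$, again contradicting (d). Keeping this bookkeeping consistent—particularly at the boundary $s=4$, where a careless shortcut can drop the length to $2$—is the only delicate point; once it is handled, $L(\Delta)=C_s$, which is exactly (b). Assembling the implications (a)$\Leftrightarrow$(b), (b)$\Rightarrow$(c)$\Rightarrow$(a), and (b)$\Rightarrow$(d)$\Rightarrow$(b) yields the full equivalence.
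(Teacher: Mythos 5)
Your proposal is correct, and for the key implication it takes a genuinely different route from the paper. The paper handles (c) and (d) uniformly: it proves (c)$\Rightarrow$(b) by noting that the special cycle induces a cycle $C_s$ in $L(\Delta)$, and that if $\Delta$ were not a linear cycle there would be a vertex $v$ --- necessarily not a connecting vertex, by speciality --- whose facets span a complete subgraph $K_r\not\subseteq C_s$; an arc of $C_s$ meeting $K_r$ exactly in its two endpoints then closes up through $v$ into a special cycle of length in $[3,s-1]$, a contradiction; the proof of (d)$\Rightarrow$(b) is declared almost identical. You instead prove (c)$\Rightarrow$(a) outright: a simplicial cycle is precisely a non-forest all of whose nonempty proper subcomplexes are forests, and by \cite[Theorem 3.2]{MR2434285} the length-$s$ special cycle makes $\Delta$ a non-forest, while a special cycle of length $\ge 3$ in a proper subcomplex would have length at most $s-1$ and would also be a special cycle of $\Delta$ (speciality is a condition only on the facets appearing in the sequence, and facets of a subcomplex are facets of $\Delta$), contradicting (c). This is a clean modularization that removes all line-graph combinatorics from the special-cycle implication; its price is that it cannot be reused for (d), since forests are characterized by the absence of special cycles, not of Berge cycles, so there you still need the chord-and-shortcut argument, which is essentially the paper's argument organized around a chord $F_a\cap F_b\ni w$ rather than a maximal complete subgraph. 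Two further remarks. First, your (d)$\Rightarrow$(b) does not actually use the non-cone hypothesis, because you handle the degenerate case $w=v_m$ by shortcutting the original Berge cycle, whereas the paper needs non-cone to ensure $K_r\subsetneq L(\Delta)$. Second, the delicate point you flag does close: if the third facet containing $v_m$ is $F_{m\pm 1}$, the shortcut must go around the long way, yielding a Berge cycle of length $s-1\ge 3$ (this is exactly where $s\ge 4$ enters), and in all other positions either arc through $v_m$ already has length in $[3,s-1]$; so your sketch completes to a full proof.
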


\begin{proof}
  It follows easily from Lemma \ref{simpl_cycle} and the above comments that $(a)\Leftrightarrow (b) \Rightarrow (c)$ and $(b) \Rightarrow (d)$. Now, we first show $(c)\Rightarrow (b)$. 

  Let $V(\Delta)$ be the vertex set of $\Delta$ and assume that \eqref{special_cycle} gives a special cycle in $\Delta$. Observe that this special cycle naturally induces a cycle $C_s$ in the line graph $L(\Delta)$ of $\Delta$.

  Suppose for contradiction that $\Delta$ is not a linear cycle. Then there is some vertex 
  \[
  v\in
  V(\Delta)\setminus \Set{v_1=v_{s+1},v_2,\dots,v_s}
  \]
  that induces a complete subgraph $K_r$ in $L(\Delta)$, which is not contained in the previous $C_s$. Let $r$ be maximal with respect to this vertex $v$. Since $\Delta$ is not a cone, $L(\Delta)$ properly contains this $K_r$. Thus, $L(\Delta)$ contains an arc of length $k-1$ $(\ge 2)$ from this $C_s$, which intersects $K_r$ at exactly two vertices. Without loss of generality, we may assume that this arc corresponds to the subsequence of facets $F_1,F_2,\dots,F_{k}$ from the special cycle \eqref{special_cycle} and $v\in F_1\cap F_{k}$. Thus, we have a special cycle 
  \begin{equation}
    v,F_1,v_2,F_2,\dots,v_{k},F_{k},v 
    \label{short-cycle} \tag{$\star$}
\end{equation}
  of shorter length, which leads to a contradiction.

  The proof for $(d)\Rightarrow (b)$ is almost identical. We only need to choose $v\in V(\Delta)$ without requiring that $v\not\in \Set{v_1,\dots,v_s}$. Nevertheless, $v$ is necessarily distinct from $v_2,\dots,v_k$ in \eqref{short-cycle}. Thus, it is a Berge cycle of shorter length.
\end{proof}

\begin{exam}
  \label{exam1}
  Let $S=\KK[x_1,\dots,x_{10}]$, $I=\braket{x_1x_2,x_2x_3,x_3x_4x_8,x_4x_5x_6x_7,x_1x_5x_{9}}$ and $f=x_9x_{10}$.  For the underlying simplicial complex $\Delta$, the facets form a special cycle of length 5:
  \[
  1,\Set{1,2},2,\Set{2,3},3,\Set{3,4,8},4,\Set{4,5,6,7},5,\Set{1,5,9},1.
  \] 
  Actually, $\Delta$ is a simplicial cycle, although it is not a forest. Thus, we still have $\Delta\in \calV$. In particular, $I=I(\Delta)$ is still of linear type. Notice that $\Set{9,10}$ is a good leaf that can be attached to $\Delta$. The corresponding new ideal $I+(f)$ is also of linear type.
\end{exam} 

The next example is a modification of the previous one.

\begin{exam}
  \label{exam2}
  Let $S=\KK[x_1,\dots,x_{10}]$, $I'=\braket{x_1x_2,x_2x_3\fbox{$x_7$},x_3x_4x_8,x_4x_5x_6x_7,x_1x_5x_{9}}$ and $f=x_9x_{10}$.  For the underlying simplicial complex $\Delta'$, the facets form a special cycle of length 5:
  \[
  1,\Set{1,2},2,\Set{2,3,\fbox{7}},3,\Set{3,4,8},4,\Set{4,5,6,7},5,\Set{1,5,9},1.
  \] 
  This is not a simplicial  cycle, since $\Delta'$ also contains a special cycle of length $3$:
  \[
  7,\Set{2,3,7},3,\Set{3,4,8},4,\Set{4,5,6,7},7,
  \]
  as well as a special cycle of length $4$:
  \[
  1,\Set{1,2},2,\Set{2,3,7},7,\Set{4,5,6,7},5,\Set{1,5,9},1.
  \] 
  Although we can still find $\Set{9,10}$ as a good leaf to attach to $\Delta'$, neither $I'$ nor $I'+\braket{f}$ is of linear type.
\end{exam}

\begin{exam}
  \label{exam3}
  This example comes from \cite[3.12]{MR2284286}.
  \[
  \Delta=\begin{tabular}{c}
    \resizebox{!}{1in}{\input{D0.pstex_t}}
  \end{tabular}
  \qquad
  \qquad
  \Delta'=\begin{tabular}{c}
    \resizebox{!}{1in}{\input{D1.pstex_t}}
  \end{tabular}
  \qquad
  \qquad
  \widetilde{\Delta}=\begin{tabular}{c}
    \resizebox{!}{1in}{\input{D2.pstex_t}}
  \end{tabular}
  \]
  $\Delta$ is not a simplicial cycle and has no leaves.
  Corresponding to $\Delta$, we consider the ideal 
  \[
  I=\braket{x_1x_2x_5x_6,x_2x_3x_7x_8,x_3x_4x_9x_{10},x_1x_4x_{11}x_{12},x_3x_8x_9}
  \subset \KK[x_1,\dots,x_{12}].
  \]
  The simplicial complex $\Delta$ is not of Villarreal type. Using Macaulay2 \cite{M2}, we know $I$ is still of linear type, as expected in Theorem \ref{even1patch}.

  Notice that the subcomplex $\Delta'$ is a simplicial cycle of length $4$. The existence of this even-length cycle in $\Delta$ does not prevent $I$ from being of linear type.

  On the other hand, the ideal
  \[
  \widetilde{I}=\braket{x_1x_2x_5x_6,x_2x_3x_7x_8,x_3x_4x_9x_{10},x_1x_4x_{11}x_{12},x_3x_8x_9\fbox{$x_{13}$}}
  \subset \KK[x_1,\dots,x_{12},x_{13}],
  \]
  which is modified from $I$ by inserting a free variable, is not of linear type.
\end{exam}

\begin{exam}
  \label{exam4}
  To illustrate the proof of Proposition \ref{equiv4}, we consider the simplicial complex $\Delta$ and $\Delta'$ in Example \ref{exam3}.
  \[
  L(\Delta)=\begin{tabular}{c}
    \resizebox{!}{1in}{\input{D3.pstex_t}}
  \end{tabular}
  \qquad
  \qquad
  K_3=\begin{tabular}{c}
    \resizebox{!}{1in}{\input{D4.pstex_t}}
  \end{tabular}
  \qquad
  \qquad
  C_5=\begin{tabular}{c}
    \resizebox{!}{1in}{\input{D5.pstex_t}}
  \end{tabular}
  \]
  It is clear that 
  \[
  1,F_1,2,F_2,8,G,9,F_3,4,F_4,1
  \]
  gives a special cycle of length $5$. Its line graph is $L(\Delta)$ and the previous special cycle corresponds to the cycle $C_5$ in $L(\Delta)$. Now the vertex $3$ contributes to the complete subgraph $K_3$ in $L(\Delta)$. We will take the arc $F_2F_1F_4F_3$ which intersects $K_3$ at the vertices $F_2$ and $F_3$. From this, we get the special cycle of length $4$ in $\Delta$:
  \[
  3, F_2,2,F_1,1,F_4,4,F_3,3.
  \]
  It corresponds to the subcomplex $\Delta'$.
\end{exam}

\begin{bibdiv}
\begin{biblist}

\bib{MR1013569}{book}{
      author={Berge, Claude},
       title={Hypergraphs},
      series={North-Holland Mathematical Library},
   publisher={North-Holland Publishing Co.},
     address={Amsterdam},
        date={1989},
      volume={45},
        ISBN={0-444-87489-5},
        note={Combinatorics of finite sets, Translated from the French},
      review={\MR{1013569 (90h:05090)}},
}

\bib{MR1666661}{article}{
      author={Conca, Aldo},
      author={De~Negri, Emanuela},
       title={{$M$}-sequences, graph ideals, and ladder ideals of linear type},
        date={1999},
        ISSN={0021-8693},
     journal={J. Algebra},
      volume={211},
       pages={599\ndash 624},
         url={http://dx.doi.org/10.1006/jabr.1998.7740},
      review={\MR{1666661 (2000d:13020)}},
}

\bib{MR2284286}{article}{
      author={Caboara, Massimo},
      author={Faridi, Sara},
      author={Selinger, Peter},
       title={Simplicial cycles and the computation of simplicial trees},
        date={2007},
        ISSN={0747-7171},
     journal={J. Symbolic Comput.},
      volume={42},
       pages={74\ndash 88},
         url={http://dx.doi.org/10.1016/j.jsc.2006.03.004},
      review={\MR{2284286 (2008a:05274)}},
}

\bib{Faridi2002}{article}{
      author={Faridi, Sara},
       title={The facet ideal of a simplicial complex},
        date={2002},
        ISSN={0025-2611},
     journal={Manuscripta Math.},
      volume={109},
       pages={159\ndash 174},
         url={http://dx.doi.org/10.1007/s00229-002-0293-9},
      review={\MR{1935027 (2003k:13027)}},
}

\bib{MR2043324}{article}{
      author={Faridi, Sara},
       title={Simplicial trees are sequentially {C}ohen-{M}acaulay},
        date={2004},
        ISSN={0022-4049},
     journal={J. Pure Appl. Algebra},
      volume={190},
       pages={121\ndash 136},
         url={http://dx.doi.org/10.1016/j.jpaa.2003.11.014},
      review={\MR{2043324 (2004m:13058)}},
}

\bib{arxiv-1307.2190}{article}{
      author={Faridi, Sara},
       title={A good leaf order on simplicial trees},
        date={2013-07},
      eprint={arXiv:1307.2190},
}

\bib{arXiv:1205.3127}{article}{
      author={Fouli, Louiza},
      author={Lin, Kuei-Nuan},
       title={Rees algebras of square-free monomial ideals},
        date={2012},
      eprint={arXiv:1205.3127},
}

\bib{M2}{misc}{
      author={Grayson, Daniel~R.},
      author={Stillman, Michael~E.},
       title={Macaulay2, a software system for research in algebraic geometry},
         how={Available at \href{http://www.math.uiuc.edu/Macaulay2/}%
  {http://www.math.uiuc.edu/Macaulay2/}},
        note={Available at \href{http://www.math.uiuc.edu/Macaulay2/}%
  {http://www.math.uiuc.edu/Macaulay2/}},
}

\bib{MR2434285}{article}{
      author={Herzog, J{\"u}rgen},
      author={Hibi, Takayuki},
      author={Trung, Ng{\^o}~Vi{\^e}t},
      author={Zheng, Xinxian},
       title={Standard graded vertex cover algebras, cycles and leaves},
        date={2008},
        ISSN={0002-9947},
     journal={Trans. Amer. Math. Soc.},
      volume={360},
       pages={6231\ndash 6249},
         url={http://dx.doi.org/10.1090/S0002-9947-08-04461-9},
      review={\MR{2434285 (2009f:13032)}},
}

\bib{MR2968912}{article}{
      author={Soleyman~Jahan, Ali},
      author={Zheng, Xinxian},
       title={Monomial ideals of forest type},
        date={2012},
        ISSN={0092-7872},
     journal={Comm. Algebra},
      volume={40},
       pages={2786\ndash 2797},
         url={http://dx.doi.org/10.1080/00927872.2011.585679},
      review={\MR{2968912}},
}

\bib{MR1104431}{article}{
      author={Sturmfels, Bernd},
       title={Gr\"obner bases of toric varieties},
        date={1991},
        ISSN={0040-8735},
     journal={Tohoku Math. J. (2)},
      volume={43},
       pages={249\ndash 261},
         url={http://dx.doi.org/10.2748/tmj/1178227496},
      review={\MR{1104431 (92j:14067)}},
}

\bib{MR2611561}{book}{
      author={Taylor, Diana~Kahn},
       title={Ideals generated by monomials in an {R}-sequence},
   publisher={ProQuest LLC, Ann Arbor, MI},
        date={1966},
  url={http://gateway.proquest.com/openurl?url_ver=Z39.88-2004&rft_val_fmt=info:ofi/fmt:kev:mtx:dissertation&res_dat=xri:pqdiss&rft_dat=xri:pqdiss:T-13006},
        note={Thesis (Ph.D.)--The University of Chicago},
      review={\MR{2611561}},
}

\bib{MR1275840}{book}{
      author={Vasconcelos, Wolmer~V.},
       title={Arithmetic of blowup algebras},
      series={London Mathematical Society Lecture Note Series},
   publisher={Cambridge University Press},
     address={Cambridge},
        date={1994},
      volume={195},
        ISBN={0-521-45484-0},
         url={http://dx.doi.org/10.1017/CBO9780511574726},
      review={\MR{1275840 (95g:13005)}},
}

\bib{MR1335312}{article}{
      author={Villarreal, Rafael~H.},
       title={Rees algebras of edge ideals},
        date={1995},
        ISSN={0092-7872},
     journal={Comm. Algebra},
      volume={23},
       pages={3513\ndash 3524},
         url={http://dx.doi.org/10.1080/00927879508825412},
      review={\MR{1335312 (96e:13005)}},
}

\bib{1196.13003}{book}{
      author={Zheng, Xinxian},
       title={{Homological properties of monomial ideals associated to
  quasi-trees and lattices.}},
    language={English},
   publisher={{Duisburg-Essen: Univ. Duisburg-Essen, Fachbereich f\"ur
  Mathematik (Diss.)}},
        date={2004},
}

\end{biblist}
\end{bibdiv}

\end{document}